\tikzset{
    partial ellipse/.style args={#1:#2:#3}{
        insert path={+ (#1:#3) arc (#1:#2:#3)}
    }
}
\newtheorem{theorem}{Theorem}[section]
\newtheorem{lemma}[theorem]{Lemma}
\newtheorem{corollary}[theorem]{Corollary}
\newtheorem{proposition}[theorem]{Proposition}
\theoremstyle{definition}
\theoremstyle{remark}
\newcommand{\eps}{\varepsilon}
\title[{Enhanced profile estimates}]{Enhanced profile estimates for ovals and translators}
\author{Kyeongsu Choi, Robert Haslhofer, Or Hershkovits}
\begin{document}

\begin{abstract}
We consider the profile function of ancient ovals and of noncollapsed translators.
Recall that pioneering work of Angenent-Daskalopoulos-Sesum (JDG '19, Annals '20) gives a sharp $C^0$-estimate and a quadratic concavity estimate for the profile function of two-convex ancient ovals, which are crucial in their papers as well as a slew of subsequent papers on ancient solutions of mean curvature flow and Ricci flow. In this paper, we derive a sharp gradient estimate, which enhances their $C^0$-estimate, and a sharp Hessian estimate, which can be viewed as converse of their quadratic concavity estimate. Motivated by our forthcoming work on ancient noncollapsed flows in $\mathbb{R}^4$, we derive these estimates in the context of ancient ovals in $\mathbb{R}^3$ and noncollapsed translators in $\mathbb{R}^4$, though our methods seem to apply in other settings as well.
\end{abstract}

\maketitle

\section{Introduction}

Recall that ancient ovals are compact ancient noncollapsed mean curvature flows that are not self-similar. Existence of ancient ovals has been proved more than 20 years ago in the work of White \cite{White_nature}, where it already became clear that ancient ovals play a key role in singularity analysis. On the other hand, since ancient ovals are constructed by limiting arguments, tackling questions about uniqueness and classification took much longer. In pioneering work \cite{ADS1,ADS2}, Angenent-Daskalopoulos-Sesum proved that ancient ovals in $\mathbb{R}^3$ are unique up to scaling and rigid motion.\footnote{Their result also holds in higher dimensions under the additional assumption that the solution is uniformly two-convex.} In their first paper, they established unique asymptotics. To recall the statement, given any ancient oval $M_t \subset \mathbb{R}^3$, let us considerer the profile function $v(y,\tau)$ of the renormalized flow $\bar{M}_\tau = e^{\tau/2} M_{-e^{-\tau}}$ in suitable coordinates, namely
\begin{equation}
\bar{M}_\tau = \big\{ (y,y_2,y_3)\in\mathbb{R}^3 \, : \, |(y_2,y_3)|= v(y,\tau)\big\}.
\end{equation}
Moreover, in the tip regions one defines $Y(\cdot,\tau)$ as the inverse function of $v(\cdot,\tau)$, and sets
\begin{equation}
Z(\rho,\tau) := |\tau|^{1/2} \left( Y(|\tau|^{-1/2}\rho,\tau) -  Y(0,\tau)\right).
\end{equation}

\begin{theorem}[{unique asymptotics \cite{ADS1}}]\label{thm_ADS1}
For $\tau\to -\infty$ we have the following asymptotics:
\begin{enumerate}
\item Parabolic region: Given any $K<\infty$ and $k\geq 1$, the profile function $v$ satisfies
\begin{equation*}
v(y,\tau) = \sqrt{2}-\frac{y^2-2}{\sqrt{8}|\tau|} + o(|\tau|^{-1})\qquad \mathrm{in}\,\, C^k \,\, \mathrm{for}\,\, |y|\leq K.
\end{equation*}
\item Intermediate region: Given any $\delta>0$, the function $\bar{v}(z,\tau):=v(|\tau|^{1/2}z,\tau)$ satisfies
\begin{equation*}
\bar{v}(z,\tau)=\sqrt{2-z^2}+ o(|\tau|^{-1})\qquad \mathrm{in}\,\, C^0 \,\, \mathrm{for}\,\, |z| \leq \sqrt{2}-\delta.
\end{equation*}
\item Tip region: Given any $L<\infty$ and $k\geq 1$, the tip profile function $Z(\rho,\tau)$ satisfies
\begin{equation*}
Z(\rho,\tau) = Z^B(\rho) + o(|\tau|^{-1})\qquad \mathrm{in}\,\, C^k \,\, \mathrm{for}\,\, |\rho |\leq L,
\end{equation*}
where $Z^B$ denotes the profile function of the 2d-bowl with speed $1/\sqrt{2}$.
\end{enumerate}
\end{theorem}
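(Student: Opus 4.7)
The plan is to attack the three regions separately and link them by matching, using a Merle--Zaag spectral decomposition around the round cylinder as the main engine.

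\emph{Cylindrical region (i).} Writing $v = \sqrt{2}+w$ in the renormalized flow equation for $v$, the perturbation satisfies $w_\tau = \cL w + N(w)$ with
\[ \cL = \bd_y^2 - \tfrac{y}{2}\bd_y + 1, \]
which is self-adjoint on the Gaussian Hilbert space $\fH = L^2(\mathbb{R}, e^{-y^2/4}\,dy)$ with eigenvalues $1-k/2$ and Hermite eigenfunctions $H_k$. Thus there are two unstable modes ($H_0, H_1$), one neutral mode $H_2 = y^2-2$, and a stable subspace on which $\cL \leq -\tfrac12$. The Merle--Zaag ODE lemma applied to the norms of the three projections of $w$ produces a dichotomy: asymptotically either the unstable or the neutral part dominates. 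The unstable alternative should be excluded using the pointwise bound $v \leq \sqrt{2}$ (from the cylindrical backward blow-down) together with a blow-up/compactness argument, leaving $w = a(\tau)(y^2-2) + o(|a|)$ with $a(\tau)\to 0^-$. Projecting the equation onto $H_2$, using $\cL H_2 = 0$ and tracking the dominant quadratic nonlinearity coming from the Taylor expansion of $-1/v$ around $\sqrt 2$, one obtains an ODE of the form $\dot a = -\sqrt{8}\, a^2(1+o(1))$, whose relevant solution is $a(\tau) = -1/(\sqrt{8}|\tau|) + o(|\tau|^{-1})$. Higher $C^k$ bounds then follow from interior parabolic Schauder estimates applied to $w$.

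\emph{Intermediate and tip regions (ii)--(iii).} For (ii), the cylindrical expansion (i) encodes the ellipsoid $|\tau|(2-v^2)\approx y^2$ on fixed compact subsets of $y$, which after the substitution $z=y/|\tau|^{1/2}$ rewrites as $\bar v\approx \sqrt{2-z^2}$. I would propagate this into the intermediate region via an ellipsoidal barrier argument, using concavity of $v^2$ in $y$ together with (i) at $|z|=\varepsilon$ on the cylindrical side and the tip expansion below at $|z|=\sqrt{2}-\delta$. For (iii), I would rescale at the tip by $|\tau|^{1/2}$ so that $Z(\rho,\tau)$ satisfies a renormalized equation that converges formally, as $\tau\to-\infty$, to the translator equation at speed $1/\sqrt{2}$, the speed being fixed by matching to (ii). White's compactness of ancient noncollapsed flows together with the uniqueness of the 2d bowl among rotationally symmetric noncollapsed convex translators forces any subsequential limit of $Z(\cdot,\tau)$ to coincide with $Z^B$, giving (iii), with the $C^k$ rate coming from Schauder estimates on the linearized equation near the bowl.

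\emph{Main obstacle.} The hardest step is the exclusion of the unstable modes in the cylindrical analysis: two-convexity and $v\leq\sqrt{2}$ yield only asymptotic and integral information about $w$, so the Merle--Zaag dichotomy must be supplemented with a delicate blow-up argument exploiting the full ancient oval structure. This is precisely the step where two-convexity genuinely enters, and it is where a quadratic-concavity type control on $v$ is the natural input that upgrades the pointwise bound to a spectral statement sharp enough to pin down the coefficient $-1/\sqrt{8}$.
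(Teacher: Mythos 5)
This theorem is not proved in the paper under review: it is recalled verbatim, with the citation built into the theorem header, as a result of Angenent--Daskalopoulos--Sesum \cite{ADS1}. There is therefore no internal proof to compare your sketch against; the authors treat Theorem~\ref{thm_ADS1}, together with Theorem~\ref{thm_ADS2}, as known input for the new gradient and Hessian estimates that are the actual content of the paper.

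As a reconstruction of the \cite{ADS1} strategy, your outline has the right skeleton: linearization about the cylinder with the operator $\mathcal{L}=\partial_y^2-\tfrac{y}{2}\partial_y+1$ on the Gaussian $L^2$-space, the Merle--Zaag dichotomy, dominance of the neutral Hermite mode $y^2-2$, and the projected ODE $\dot a=-\sqrt{8}\,a^2\,(1+o(1))$. The coefficient $\sqrt{8}$ is correct, since $(y^2-2)^2=H_4+8H_2+8H_0$ and the leading nonlinearity from the expansion of $-1/v$ about $\sqrt 2$ is $-w^2/(2\sqrt{2})$. Two caveats, though. First, the exclusion of the unstable modes is not a soft compactness step, as your sketch suggests: the $H_1$-mode is handled by the reflection symmetry of the oval and the $H_0$-mode by a sign analysis tied to the choice of parabolic normalization (center and blow-up time of the reference cylinder); the pointwise bound $v\le\sqrt{2}$ is a one-sided constraint that does not, by itself, rule out dominance of a positive or negative $H_0$-component, and this is a genuinely delicate point in \cite{ADS1}. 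Second, the $o(|\tau|^{-1})$ rate claimed here in the intermediate region does not follow from a bare ellipsoidal barrier matched at the parabolic and tip ends; obtaining any rate there beyond $o(1)$ requires the full weighted inner--outer gluing analysis that constitutes the bulk of \cite{ADS1}. So the architecture of your sketch is sound, but it substantially understates the work needed at precisely the steps you flag, and in any case the present paper does not reprove this result.
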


In their second paper, Angenent-Daskalopoulos-Sesum upgraded their unique asymptotics to a uniqueness result. The key analytic difficultly was to deal with the collar region $\{ L|\tau|^{-1/2}\leq v \leq \theta\}$, which connects the translator behavior in the tip region and the cylindrical behavior in the intermediate region. To overcome this, via a clever maximum principle argument they proved the following crucial estimate:

\begin{theorem}[{quadratic concavity \cite{ADS2}}]\label{thm_ADS2}
For $\tau$ sufficiently negative the function $y\mapsto v(y,\tau)^2$ is concave, namely
\begin{equation}
v v_{yy} +v_y^2 \leq 0\qquad \mathrm{for}\,\, \tau\ll 0.
\end{equation}
\end{theorem}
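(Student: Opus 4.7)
The plan is to work with the variable $u := v^2$, for which the claim becomes the concavity $u_{yy} \leq 0$, and to establish this via a parabolic maximum principle applied to the PDE satisfied by $A := u_{yy}$, bootstrapping off the sharp asymptotics of Theorem \ref{thm_ADS1} at the boundary of the intermediate region.

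\textbf{Step 1 (Evolution equation for $A$).} Multiplying the renormalized MCF profile equation by $2v$ and rewriting in terms of $u$ and $u_y = 2v v_y$, one obtains
\[
u_\tau = \frac{4u\, u_{yy} - 2 u_y^2}{4u + u_y^2} - 2 - \tfrac{1}{2} y\, u_y + u,
\]
which is uniformly parabolic wherever $u > 0$. Differentiating twice in $y$ and setting $A := u_{yy}$, I would arrive at an equation of the form
\[
A_\tau = a\, A_{yy} + b\, A_y + c\, A + R,
\]
with $a > 0$ and $a,b,c$ depending polynomially on $u, u_y$. The \emph{crucial} algebraic point is to reorganize the remainder $R$ so that it is manifestly non-positive whenever $A \geq 0$; this hinges on the balance between the cylindrical reaction $-1/v$ (contributing $-2$ under the substitution $u = v^2$) and the dilation term $v/2$ (contributing $+u$). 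I expect this reorganization to be the main obstacle of the entire argument --- it is essentially the content of the ``clever maximum principle argument'' alluded to in the text.

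\textbf{Step 2 (Boundary data via Theorem \ref{thm_ADS1}).} Partition the domain $|y| \leq Y(0,\tau)$ into a parabolic region $\{|y| \leq K\}$, an intermediate region $\{K \leq |y| \leq Y(0,\tau) - |\tau|^{-1/2} L\}$, and a tip region $\{Y(0,\tau) - |\tau|^{-1/2} L \leq |y| \leq Y(0,\tau)\}$, with $K, L$ large constants to be chosen later. In the parabolic region, the $C^k$ asymptotics of Theorem \ref{thm_ADS1}(i) give $u = 2 - (y^2 - 2)/|\tau| + o(|\tau|^{-1})$, hence $A = -2/|\tau| + o(|\tau|^{-1}) < 0$ for $\tau \ll 0$. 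In the tip region, Theorem \ref{thm_ADS1}(iii) identifies the rescaled profile with the 2d bowl translator of speed $1/\sqrt{2}$ in $C^k$, and a direct ODE computation on the bowl profile verifies $u_{yy} \leq 0$ there (strictly away from the tip and marginally at the tip); converting back to $y$-coordinates via the chain rule yields $A \leq o(|\tau|^{-1})$ in the tip region as well.

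\textbf{Step 3 (Maximum principle in the intermediate region).} Suppose towards contradiction that $A$ takes a positive value somewhere in the intermediate region at some time $\tau_\star \ll 0$. By Step 2, $A < 0$ on both portions of the boundary $\{|y| = K\}$ and $\{|y| = Y(0,\tau) - |\tau|^{-1/2} L\}$ for $\tau$ sufficiently negative, so the positive supremum over the closed intermediate region is realized at a spacetime interior point $(y_\star, \tau_\star)$. At that point $A_\tau \geq 0$, $A_y = 0$, and $A_{yy} \leq 0$, so the PDE from Step 1 forces $c(y_\star,\tau_\star)\, A(y_\star,\tau_\star) + R(y_\star,\tau_\star) \geq 0$. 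The sign arrangement of $R$ from Step 1, combined with the fact that $c$ remains controlled in the intermediate region, produces the required contradiction and thus establishes $A \leq 0$ on the whole domain for all $\tau \ll 0$.
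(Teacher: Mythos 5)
Note first that Theorem \ref{thm_ADS2} is a result of Angenent--Daskalopoulos--Sesum \cite{ADS2} that the present paper cites without reproving, so there is no in-paper argument to compare against; the proper comparison is to the original ADS2 proof.

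Your sketch captures the general shape of that proof (a maximum principle applied to the second derivative of $v^2$, anchored at the boundary by the precise asymptotics), but there is a genuine gap at the crucial step. In Step 1 you posit, without computing, that the remainder $R$ in the evolution of $A = u_{yy}$ can be reorganized to be nonpositive wherever $A\geq 0$, and you yourself flag this as ``the main obstacle of the entire argument.'' That sign structure \emph{is} the theorem; asserting that you expect it to hold is not a proof, and the reorganization is not routine --- for instance, the naive reaction term $+A$ produced by differentiating $u-2$ twice has the wrong sign and must be cancelled by the drift term and then dominated by the nonlinear contributions. The setting you chose also makes the algebra harder than necessary: as the present paper notes in its outline, ADS2 work with the \emph{unrescaled} profile equation $V_t = V_{xx}/(1+V_x^2) - 1/V$ (no dilation terms) and show that $(V^2)_{xx}$ is a subsolution of a suitable parabolic operator --- precisely the structure you need but did not exhibit. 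A secondary gap is in Step 3: the time domain is $(-\infty,\tau_0]$, which is noncompact, so the positive supremum of $A$ need not be attained at an interior spacetime point and $A_\tau\geq 0$ there is not automatic; one must additionally control $\sup A$ as $\tau\to-\infty$ using the asymptotics, for instance via a monotone auxiliary quantity as in the maximum-principle arguments used elsewhere in this paper (Theorems \ref{thm:min.gradient.improved} and \ref{thm:Hessian.cyl.esti}).
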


Profile estimates similar to the ones in the above two theorems played a fundamental role in many subsequent papers on ancient solutions. In particular, for the mean curvature flow this includes the uniqueness result for cohomogeneity-one ovals by Du and the second author \cite{DH_ovals}, the work on noncollapsed translators in $\mathbb{R}^4$ by the three of us \cite{CHH_translators}, and the classification of bubble-sheet ovals in $\mathbb{R}^4$ by B. Choi, Du, Daskalopoulos, Sesum and the second author \cite{CDDHS}. For the Ricci flow, this includes the classification of $3$-dimensional compact $\kappa$-solutions by Brendle, Daskalopoulos and Sesum \cite{BDS}, which upgrades the unique asymptotics from their prior joint work with Angenent \cite{ABDS}, as well as the generalization to higher dimensions under the additional PIC2 assumption from their joint work with Naff \cite{BDNS}.\\

The purpose of the present paper is to derive new profile estimates that enhance the ones from the above theorems. Our primary motivation for this comes from our work on the classification of ancient noncollapsed flows in $\mathbb{R}^4$, a program initiated in our paper \cite{CHH_wings} and a paper by Du and the second author \cite{DH_shape}, and further developed in a subsequent paper by Du and the second author \cite{DH_norotation}, and in the two above cited papers on translators \cite{CHH_translators} and bubble-sheet ovals \cite{CDDHS}. Using the terminology form \cite{DH_shape}, the relevant rank-one examples for this classification program in $\mathbb{R}^4$ are $\mathbb{R}\times$2d-ovals and the 3-dimensional noncollapsed translators constructed by Hoffman-Ilmanen-Martin-White \cite{HIMW}, and we will thus derive our new estimates in the setting of ancient ovals in $\mathbb{R}^3$ and of noncollapsed translators in $\mathbb{R}^4$. However, it seems likely that our methods will be useful in other settings as well.\\

Let us describe our new estimates in the setting of ancient ovals in $\mathbb{R}^3$. We consider the gradient function
\begin{equation}
g:=-\frac{1}{z}(\bar{v}^2)_z,
\end{equation}
where $\bar{v}(z,\tau)=v(|\tau|^{1/2}z,\tau)$ denotes the squashed profile function as above, and prove:

\begin{theorem}[gradient estimate]\label{thm:min.gradient.improved_intro}
We have
\begin{equation}
\lim_{L\to\infty}\limsup_{\tau\to -\infty}   \sup_{\bar{v}(z,\tau)\geq L|\tau|^{-1/2}} |g(z,\tau)-2|=0.
\end{equation}
\end{theorem}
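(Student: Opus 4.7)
The plan is to combine the quadratic concavity of Theorem~\ref{thm_ADS2} with all three asymptotic regions of Theorem~\ref{thm_ADS1} via a splitting of $\{\bar v(z,\tau)\geq L|\tau|^{-1/2}\}$ into an interior part and a collar, handled respectively by the interior asymptotics plus concavity and by a concavity sandwich anchored at the outer boundary. Throughout, I would work with $\bar f(z,\tau):=\bar v(z,\tau)^2$, so that $g=-\bar f_z/z$; Theorem~\ref{thm_ADS2} is equivalent to $\bar f_{zz}\leq 0$, hence $-\bar f_z$ is nondecreasing in $z$ on $z\geq 0$ with $\bar f_z(0,\tau)=0$, and by symmetry I restrict to $z\geq 0$. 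Let $z^{\ast}=z^{\ast}(\tau,L)$ be determined by $\bar v(z^{\ast},\tau)=L|\tau|^{-1/2}$; the intermediate asymptotics force $|\tau|^{-1/2}Y(0,\tau)\to\sqrt{2}$, so $z^{\ast}(\tau,L)\to\sqrt{2}$ as $\tau\to-\infty$ for each fixed $L$, and the supremum is over $0\leq z\leq z^{\ast}$.

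For the interior $0\leq z\leq\sqrt{2}-\delta$ with $\delta>0$ fixed, I would show $g\to 2$ uniformly as $\tau\to-\infty$. On the parabolic scale $|y|\leq K$, Theorem~\ref{thm_ADS1}(i) (applied with $k\geq 2$) gives $\bar f(z,\tau)=2-z^2+2|\tau|^{-1}+o(|\tau|^{-1})$ with derivatives in $z$ controlled accordingly, whence $\bar f_{zz}=-2+o(1)$, and then averaging $\bar f_{zz}$ from $0$ to $z$ (using $\bar f_z(0,\tau)=0$) yields $g=2+o(1)$ uniformly on $|z|\leq K|\tau|^{-1/2}$. On the intermediate scale $K|\tau|^{-1/2}\leq z\leq\sqrt{2}-\delta$, Theorem~\ref{thm_ADS1}(ii) gives $\bar f(\cdot,\tau)\to 2-z^2$ in $C^0$, and since each $\bar f(\cdot,\tau)$ is concave with a smooth limit, this $C^0$-convergence upgrades to locally uniform $C^1$-convergence, so $\bar f_z\to -2z$ uniformly on this interval and hence $g\to 2$ uniformly as well.

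For the collar $\sqrt{2}-\delta\leq z\leq z^{\ast}(\tau,L)$, the key step is to control $g$ at the outer boundary. Using $Y=v^{-1}(\cdot,\tau)$ and the definition of $Z$, a short chain-rule computation gives $\bar f_z(z^{\ast},\tau)=2L/Z_\rho(L,\tau)$. The $C^1$-part of Theorem~\ref{thm_ADS1}(iii) then yields $\bar f_z(z^{\ast},\tau)\to 2L/(Z^B)'(L)$ as $\tau\to-\infty$ for fixed $L$, and combining with $z^{\ast}\to\sqrt{2}$ and the large-$\rho$ asymptotics $|(Z^B)'(\rho)|=\rho/\sqrt{2}+O(\log\rho)$ of the speed-$1/\sqrt{2}$ bowl, one obtains $g(z^{\ast},\tau)\to 2$ after first sending $\tau\to-\infty$ and then $L\to\infty$. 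For $z$ in the collar, concavity then sandwiches $-\bar f_z(z,\tau)$ between $-\bar f_z(\sqrt{2}-\delta,\tau)\to 2(\sqrt{2}-\delta)$ (from the interior step) and $-\bar f_z(z^{\ast},\tau)$, which tends to a quantity close to $2\sqrt{2}$ for large $L$; since $z\in[\sqrt{2}-\delta,z^{\ast}]$ is close to $\sqrt{2}$, dividing yields $|g(z,\tau)-2|\leq C\delta+o_L(1)+o_\tau(1)$ on the collar. Sending first $\tau\to-\infty$, then $L\to\infty$, and finally $\delta\to 0$ gives the claim.

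The main obstacle I anticipate is the outer-boundary tip computation: one must convert carefully between the $(z,\bar v)$ and $(\rho,Z)$ parametrizations, extract the $C^1$-part of the tip convergence from Theorem~\ref{thm_ADS1}(iii), and invoke the sharp large-$\rho$ asymptotics of the bowl profile $Z^B$. Once those ingredients are in place, the remaining collar estimate is a clean concavity sandwich built on information already established in the intermediate and parabolic regions.
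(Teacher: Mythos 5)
There is a genuine gap in the intermediate step. You write that Theorem~\ref{thm_ADS1}(ii) plus concavity give $\bar f_z\to -2z$ uniformly on $K|\tau|^{-1/2}\leq z\leq\sqrt{2}-\delta$, ``and hence $g\to 2$ uniformly as well.'' That last inference is exactly where the argument breaks. The $C^0\to C^1$ upgrade via concavity yields a statement of the form $\sup_{|z|\leq\sqrt{2}-\delta}|\bar f_z(z,\tau)+2z|=o(1)$, with no rate controlled relative to $z$. Since $g-2=-(\bar f_z+2z)/z$, the error after dividing by $z$ is $o(1)/z$, and for $z$ as small as $K|\tau|^{-1/2}$ this is $o(|\tau|^{1/2})/K$, which need not be small. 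Put differently: the parabolic asymptotics (i) control $g$ only for $|y|\leq K$, i.e.\ $z\leq K|\tau|^{-1/2}$, while the intermediate-plus-concavity argument controls $g$ only for $z\geq\delta$, a \emph{fixed} lower bound. The range $K|\tau|^{-1/2}\leq z\leq\delta$ -- the belt region -- is not reached by either side, and a purely qualitative $C^1$-limit cannot bridge it.

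This is precisely the region the paper singles out as the heart of the matter. Proposition~\ref{lem:gradient.conv.tale} in the paper obtains only \eqref{eq:zq_z.middle} (valid for $\delta\leq|z|\leq\sqrt{2}-\delta$) together with \eqref{eq:zq_z.tale} (valid for $z\leq K|\tau|^{-1/2}$), and the belt is then handled by an entirely separate maximum-principle argument: one computes the evolution of $g$ (Proposition~\ref{prop:evol.pseudo_hessian}), establishes differential inequalities at spatial extrema of $g^\pm=(1\pm z)g$ on $\{5|\tau|^{-1/2}\leq z\leq\delta\}$ (Lemma~\ref{lem:min.gradient}), and integrates these backwards in time to rule out $\sup g^->2+2\eps$ and $\inf g^+<2-2\eps$. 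Your proposal has no analogue of this step. Your parabolic-region computation and your collar sandwich at the outer boundary are both fine and match what the paper does in Proposition~\ref{lem:gradient.conv.tale}, but the belt region needs the extra PDE input; the soft concavity argument alone does not close it.

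A secondary remark: even granting the $o(|\tau|^{-1})$ rate as printed in Theorem~\ref{thm_ADS1}(ii), your argument as written does not use it quantitatively -- you invoke a qualitative $C^1$-limit and then divide by $z$. To even have a chance of avoiding the maximum principle one would need a quantitative convexity-interpolation bound of the form $|\bar f_z+2z|\lesssim\sqrt{\eta(\tau)}$ in terms of the $C^0$ error $\eta(\tau)$, and then match that rate against $z\gtrsim|\tau|^{-1/2}$; none of that is in the proposal. (One should also be cautious here: for $z\sim K|\tau|^{-1/2}$ the parabolic expansion gives $\bar v-\sqrt{2-z^2}=\tfrac{1}{\sqrt{2}|\tau|}+o(|\tau|^{-1})$, so the error is genuinely of order $|\tau|^{-1}$ near the inner edge of the intermediate region, which makes such a route delicate at best.)
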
    

This enhances the $C^0$-estimate from Theorem \ref{thm_ADS1} (unique asymptotics) to a sharp $C^1$-estimate, valid in an optimal region. We note that on the 2d-bowl soliton one has $g\to 4$ as $\rho\to 0$ and $g\to 2$ as $\rho\to \infty$, so the limit $L\to\infty$ is indeed necessary. As we will see, our gradient estimate is most delicate in a new region that we call the belt region. Namely, the key challenge is the region $\{\sqrt{2}-K^2|\tau|^{-1}\geq v \geq \sqrt{2}-\theta\}$, which interpolates between the $C^k$-behavior in the parabolic region and the $C^0$-behavior in the intermediate region.\\

Our second main estimate for ancient ovals in $\mathbb{R}^3$ is the following global $C^2$-estimate:

\begin{theorem}[Hessian estimate]\label{thm:hessian.improved_intro}
There exists a numerical constant $C<\infty$, such that
\begin{equation}
v v_{yy} +C\left(v_y^2+\frac{1}{|\tau|}\right) \geq 0\qquad \mathrm{for}\,\, \tau\ll 0.
\end{equation}
\end{theorem}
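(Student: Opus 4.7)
The plan is a maximum-principle argument for the test quantity
$$Q(y,\tau) := v\,v_{yy} + C\bigl(v_y^2 + |\tau|^{-1}\bigr),$$
with $C$ a sufficiently large absolute constant. The three terms of $Q$ all balance at scale $|\tau|^{-1}$ in the regions of Theorem~\ref{thm_ADS1}, which strongly suggests that this is the correct combination. I would first verify $Q \geq 0$ in each of the three classical regions. In the parabolic region the $C^k$ expansion gives $v v_{yy} = -|\tau|^{-1} + o(|\tau|^{-1})$ and $v_y^2 = O(|\tau|^{-2})$, hence $Q \geq (C-1-o(1))|\tau|^{-1}$. In the intermediate region, upgrading the $C^0$ expansion $\bar v \approx \sqrt{2-z^2}$ via standard interior parabolic regularity yields $\phi_{yy} := v v_{yy} + v_y^2 \approx -|\tau|^{-1}$, so $Q \approx (C-1)(v_y^2 + |\tau|^{-1})$. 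In the tip region, the $C^k$ convergence of $Z$ to the 2d-bowl implies $|v v_{yy}| \leq (1+o(1))\,v_y^2$ with $v_y^2 \gg |\tau|^{-1}$, so $Q \geq (C-1-o(1))\,v_y^2$. In each case, taking $C$ large enough gives $Q \geq 0$.

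The remaining work is the collar/belt region $\{L|\tau|^{-1/2} \leq v \leq \sqrt{2}-\theta\}$ where no expansion is sharp. The strategy is to differentiate the renormalized MCF equation
$$v_\tau = \frac{v_{yy}}{1+v_y^2} - \frac{y v_y}{2} + \frac{v}{2} - \frac{1}{v}$$
twice in $y$ and assemble an evolution $Q_\tau = \mathcal{L} Q + \mathcal{R}$, where $\mathcal{L}$ is a linear uniformly parabolic operator with drift and $\mathcal{R}$ is a reaction. Suppose, for contradiction, that $\inf_y Q(y,\tau) < 0$ for $\tau$ arbitrarily negative. By the first step and a standard barrier argument, the infimum is attained at an interior collar point $(y_0,\tau_0)$ where $Q_y = 0$, $Q_{yy} \geq 0$, $Q_\tau \leq 0$, so $\mathcal{R}(y_0,\tau_0) \leq 0$. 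A key input here is the ADS quadratic concavity (Theorem~\ref{thm_ADS2}), which forces $v v_{yy} \leq -v_y^2 \leq 0$ and, combined with $Q \leq 0$, pins $v v_{yy}$ to size $O(|\tau|^{-1})$ in the collar; this in turn makes the most dangerous, cubic-in-$v_{yy}$ terms in $\mathcal{R}$ (coming from differentiating $v_{yy}/(1+v_y^2)$) subordinate.

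The main obstacle is to arrange $\mathcal{R} > 0$ on $\{Q=0\}\cap\{\mathrm{collar}\}$. The manifestly positive contribution is $C|\tau|^{-2}$ from $\partial_\tau(C|\tau|^{-1})$. The principal competitor arises from $\partial_y^2(1/v) = -v_{yy}/v^2 + 2v_y^2/v^3$: inserting $v v_{yy} = -C(v_y^2+|\tau|^{-1})$ on $\{Q=0\}$ and using $v \approx \sqrt{2}$ in the collar, this produces a term of sign opposite to $C|\tau|^{-2}$ that must be absorbed by a combination of $C|\tau|^{-2}$ and the second-order contribution $\mathcal{L}Q\geq 0$ at the minimum. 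The bookkeeping is delicate and analogous in spirit to the maximum-principle argument in Angenent-Daskalopoulos-Sesum for Theorem~\ref{thm_ADS2}, with the essential new ingredient being the $|\tau|^{-1}$ safety margin that provides the driving $C|\tau|^{-2}$. If the naive combination leaves $\mathcal{R}$ marginal, the same scheme runs with a tuned variant $v_y^2 + A|\tau|^{-1}$ for $A \neq C$, or with a slowly-growing safety weight $|\tau|^{-1}\log|\tau|$ in $Q$.
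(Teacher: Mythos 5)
Your candidate quantity $Q = vv_{yy} + C(v_y^2 + |\tau|^{-1})$ with $C$ large is, up to a power of $v$ and the $|\tau|^{-1}$ safety term, the quantity the paper ultimately runs a max principle for in the tip region, since $(V^p)_{xx} = pV^{p-2}\bigl(VV_{xx}+(p-1)V_x^2\bigr)$ with $p=C+1$. However, the paper does not run a single global max principle. It splits the argument into two pieces with different quantities and different variable regimes: in the intermediate region it applies a max principle to $h^\ast = (2-|z|)(-q^2q_{zz}+qq_z^2)$ in the squashed variable $z=y/\sqrt{|\tau|}$ (Section 3), and in the tip/collar region it applies a max principle to $H=(V^p)_{xx}$ in the \emph{unrescaled} variables (Section 4). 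Both steps use the gradient estimate (Theorem \ref{thm:min.gradient.improved_intro}) as an essential input, which your proposal never establishes or invokes, and both use the ADS cylindrical derivative estimates for error and boundary control.

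The reaction analysis you sketch has a gap I believe is fatal. You claim that quadratic concavity together with $Q\leq 0$ pins $vv_{yy}$ to $O(|\tau|^{-1})$ in the collar, and that $v\approx\sqrt 2$ there. Neither is correct: in the collar $\{L|\tau|^{-1/2}\leq v\leq\theta\}$ the profile $v$ is \emph{small} (it is the belt where $v\approx\sqrt 2$), and by the gradient estimate $v_y^2\sim 1/(v^2|\tau|)$, which ranges from $\sim |\tau|^{-1}$ at the outer edge up to $\sim 1$ at the inner edge $v\sim|\tau|^{-1/2}$. Hence at a point with $Q\leq 0$ one has $vv_{yy}$ of order $-C$, not $-C|\tau|^{-1}$, the cubic-in-$v_{yy}$ reaction terms are not subordinate, and the driving term $C|\tau|^{-2}$ from $\partial_\tau(C|\tau|^{-1})$ is dwarfed by reaction contributions of size $v_y^2/v^2\sim 1/(v^4|\tau|)$, which reaches $\sim|\tau|$ at the inner edge. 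The paper's resolution is not a tuned safety margin: it is the structural observation that for $p$ large the term $-(p-2)Q^{-2/p}H$ in the $H$-evolution has the favorable sign when $H<0$, and the positive term $2(1-\tfrac{2}{p})Q^{-1-2/p}Q_x^2$ absorbs the remainder $R$ — the exact opposite regime from the $p=2$ subsolution of ADS2. Neither mechanism is visible in your sketch, and without computing the evolution of $Q$ you cannot verify the supersolution property on which the whole argument hinges.
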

Our Hessian estimate can be viewed as converse of Theorem \ref{thm_ADS2} (quadratic concavity). In particular, it gives quadratic smallness for the second derivates of the profile function. Together with standard interior estimates it yields quadratic smallness of all higher derivatives as well:

\begin{corollary}[higher derivative estimates]
For every $k\geq 2$ there exists a constant $C_k<\infty$, such that
\begin{equation}
v^{k-1}\left| \partial^k_y v\right| \leq C_k \left(v_y^2+\frac{1}{|\tau|}\right)\qquad\qquad \mathrm{for}\,\, \,  v(y,\tau)\geq L|\tau|^{-1/2}, \,\, \tau\ll 0.
\end{equation}
\end{corollary}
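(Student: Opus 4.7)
The plan is to establish the $k=2$ case directly and then bootstrap to higher $k$ via interior parabolic regularity. For $k=2$, Theorem~\ref{thm:hessian.improved_intro} combined with Theorem~\ref{thm_ADS2} immediately yields $|vv_{yy}|\leq C(v_y^2+|\tau|^{-1})$. For $k\geq 3$, I would pass to the un-renormalized mean curvature flow profile $V(X,T)$ related to $v$ by $v(y,\tau)=e^{\tau/2}V(e^{-\tau/2}y,-e^{-\tau})$; a direct calculation shows $v_y=V_X$ and $v^{k-1}|\partial_y^kv|=V^{k-1}|\partial_X^kV|$, so that the sought estimate becomes $V^{k-1}|\partial_X^kV|\leq C_k(V_X^2+|\tau|^{-1})$. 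The advantage of these coordinates is that $V$ satisfies the scale-invariant equation
\begin{equation}
V_T=\frac{V_{XX}}{1+V_X^2}-\frac{1}{V}.
\end{equation}

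At any point $(X_0,T_0)$ with $V_0:=V(X_0,T_0)$ in the relevant region, I would rescale by $V_0$, setting $\tilde{V}(\tilde{X},\tilde{T}):=V(X_0+V_0\tilde{X},T_0+V_0^2\tilde{T})/V_0$, which satisfies the same equation by scale invariance. Using Theorem~\ref{thm:min.gradient.improved_intro} (which, together with the standing condition $v\geq L|\tau|^{-1/2}$, gives $|V_X|=|v_y|\leq C/L$ in the region) and $\tilde{V}(0,0)=1$, a short integration shows that on a parabolic ball of definite size $\tilde{V}$ is bounded above and below and has bounded first derivative, so the rescaled equation is uniformly parabolic with smooth structure. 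A naive application of interior Schauder estimates to $\tilde{V}$ itself would only yield $|\partial_{\tilde{X}}^k\tilde{V}|\leq C_k$, which unscales to the non-sharp bound $|\partial_X^kV|\leq C_kV_0^{1-k}$; to recover the correct $V_X^2+|\tau|^{-1}$ factor I would instead apply interior estimates to $\tilde{w}:=\tilde{V}_{\tilde{X}\tilde{X}}$.

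The function $\tilde{w}$ satisfies a linear parabolic equation with bounded coefficients obtained by differentiating the equation for $\tilde{V}$ twice in $\tilde{X}$, and the $k=2$ case transported to the rescaled variables gives the input bound $\|\tilde{w}\|_{L^\infty}\leq C(V_X^2+|\tau|^{-1})$ on the unit parabolic ball. Standard interior Schauder estimates on this linear equation then yield $|\partial_{\tilde{X}}^m\tilde{w}(0,0)|\leq C_m(V_X^2+|\tau|^{-1})$ for each $m\geq 0$, which unscales to $V^{m+1}|\partial_X^{m+2}V|\leq C_{m+2}(V_X^2+|\tau|^{-1})$; reverting to the renormalized variables completes the proof. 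The main technical point, which I expect to be bookkeeping rather than a genuine obstacle, is to verify that $V_X^2$ and $|\tau|^{-1}$ vary by at most a bounded factor across the rescaled parabolic ball in all three relevant regions (parabolic, intermediate, and collar), so that the $L^\infty$ bound on $\tilde{w}$ at the center propagates throughout the ball; this follows from the gradient estimate, the fact that $|\tau|$ changes only logarithmically on the relevant time scale $V_0^2$, and the control on $v_{yy}$ from the $k=2$ case itself.
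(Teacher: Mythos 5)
Your proposal is correct and is the natural implementation of what the paper only asserts in one line (``Together with standard interior estimates it yields quadratic smallness of all higher derivatives''). The key steps — combining Theorem~\ref{thm_ADS2} (which gives $vv_{yy}\le -v_y^2\le 0$) with Theorem~\ref{thm:hessian.improved_intro} for the $k=2$ bound, passing to the unrescaled profile where $v^{k-1}\partial_y^kv=V^{k-1}\partial_X^kV$ and the equation is scale-invariant, rescaling by $V_0$, and then applying interior Schauder estimates not to $\tilde V$ itself (which only gives the non-sharp bound $|\partial_X^kV|\lesssim V_0^{1-k}$) but to the second derivative $\tilde w=\tilde V_{\tilde X\tilde X}$ after first obtaining coarse smooth bounds on the coefficients — are exactly right. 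The ``bookkeeping'' you defer is genuine but routine: monotonicity of $V$ in $T$ and $|V_T|\lesssim 1/V$ give two-sided control of $\tilde V$ on a parabolic ball of definite size; the logarithmic nature of $|\tau|=\log|T|$ makes $|\tau|^{-1}$ vary by a bounded factor; and the $k=2$ estimate itself bounds the oscillation of $V_X$ by $O(V_X^2+|\tau|^{-1})$, which (since $|V_X|\le C/L$ is small) keeps $V_X^2+|\tau|^{-1}$ comparable across the ball. One should also note that the inhomogeneous term $f=-2\tilde V_{\tilde X}^2/\tilde V^3$ in the linear equation for $\tilde w$ contributes at the acceptable order $O(V_X^2)$, so Schauder indeed returns $\|\tilde w\|_{C^{m,\alpha}}\lesssim \|\tilde w\|_{L^\infty}+\|f\|_{C^{m-2,\alpha}}\lesssim v_y^2+|\tau|^{-1}$, which unscales to the claimed estimate.
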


\bigskip

Let us now describe our results for noncollapsed translators in $\mathbb{R}^4$. We consider noncollapsed translators $M\subset\mathbb{R}^4$, that are neither $\mathbb{R}\times$2d-bowl nor 3d-bowl, and we normalize without loss of generality such that they move with unit speed in $x_4$-direction and such that their $\mathrm{SO}_2$-symmetry, c.f. \cite{Zhu}, is in the $x_2x_3$-variables. Denote by $V(x,t)$ the profile function of the level sets $M\cap\{x_4= -t\}$, namely
\begin{equation}
M\cap\{x_4= -t\}=\big\{(x,x_2,x_3,-t)\in\mathbb{R}^4 \, : \, |(x_2,x_3)|=V(x,t)\big\},
\end{equation}
and consider the corresponding renormalized profile function
\begin{equation}
v(y,\tau)=e^{\tau/2}V(e^{-\tau/2}y,-e^{-\tau}).
\end{equation}
This translator profile function $v(y,\tau)$ satisfies the same asymptotics as the profile function of the 2d-ovals in Theorem \ref{thm_ADS1} (unique asymptotics). More precisely, these asymptotic hold uniformly for families of translators in $\mathbb{R}^4$ that are $\kappa$-quadratic at some time $\tau_0$ in the sense of \cite[Definition 1.4]{CHH_translators}:

\begin{theorem}[{uniform asymptotics \cite{CHH_translators}}]\label{thm_transl_asy}
For every $\eps>0$ there exist constants $\kappa>0$ and $\tau_\ast>-\infty$, such that if $M$ is $\kappa$-quadratic at some time $\tau_0\leq\tau_\ast$, then for all $\tau\leq \tau_0$ the following holds:
\begin{enumerate}
\item Parabolic region: The profile function $v$ satisfies
\begin{equation*}
\left\| v(y,\tau) - \sqrt{2}+\frac{y^2-2}{\sqrt{8}|\tau|} \right\|_{C^{\lfloor \eps^{-1}\rfloor}(B(0,\eps^{-1}))}\leq \frac{\eps}{|\tau|}.
\end{equation*}
\item Intermediate region: The function $\bar{v}(z,\tau):=v(|\tau|^{1/2}z,\tau)$ satisfies
\begin{equation*}
\left|\bar{v}(z,\tau)-\sqrt{2-z^2}\right|\leq \eps \qquad  \mathrm{for}\,\, |z| \leq \sqrt{2}-\eps.
\end{equation*}
\item Tip region: The tip profile function $Z(\rho,\tau)$ satisfies the estimate
\begin{equation*}
\left\| Z(\cdot,\tau)-Z^B(\cdot) \right\|_{C^{\lfloor \eps^{-1}\rfloor}(B(0,\eps^{-1}))}\leq \eps,
\end{equation*}
where $Z^B$ denotes the profile function of the 2d-bowl with speed $1/\sqrt{2}$.
\end{enumerate}
\end{theorem}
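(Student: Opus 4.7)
The plan is to follow the structure of Angenent-Daskalopoulos-Sesum's proof of Theorem \ref{thm_ADS1}, but to promote it to a quantitative uniform statement by using the $\kappa$-quadratic hypothesis at $\tau_0$ as the single quantitative input and propagating it backwards through a combination of spectral analysis and a contradiction-compactness argument. Setting $w := v - \sqrt{2}$ on the cylindrical region, $w$ satisfies a quasilinear parabolic equation whose linearization at $0$ is the Ornstein-Uhlenbeck operator $\mathcal{L} = \partial_\tau - \partial_y^2 + \tfrac{y}{2}\partial_y - 1$ on the Gaussian $L^2$, with Hermite eigenfunctions and eigenvalues $1 - n/2$; modes $n\le 1$ are unstable, $n=2$ (eigenfunction $y^2-2$) is neutral, and $n\ge 3$ are stable. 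This is the spectral backbone of the whole argument.

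For part (i), I would decompose $w = w_+ + w_0 + w_-$ according to these three spectral ranges. The $\kappa$-quadratic hypothesis at $\tau_0$ supplies quantitative smallness of $w$ in a Gaussian-weighted norm. Applying a Merle-Zaag-type ODE lemma for ancient solutions (as in \cite{ADS1}) would then give, for all $\tau\le\tau_0$, that $w_+$ decays into the past and is absorbed, and that $w_-$ is dominated by $w_0$. Writing $w \approx \alpha(\tau)(y^2-2)$ and projecting the next-order nonlinearity of the PDE onto $y^2-2$ yields an ODE of the form $\dot\alpha = -2\sqrt{2}\,\alpha^2 + o(\alpha^2)$, which integrates to $\alpha(\tau) = -\tfrac{1}{\sqrt{8}|\tau|}(1+o(1))$ uniformly across the $\kappa$-quadratic family. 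Higher $y$-derivatives are then controlled by standard Schauder theory inside the Gaussian ball $B(0,\eps^{-1})$.

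For part (ii), I would use the parabolic estimate as matching data at $|y|=\eps^{-1}$ and build two-sided barriers comparing $\bar v(z,\tau)$ to slightly perturbed copies of the shrinking cylinder $\sqrt{2-z^2}$, then run the maximum principle on the equation for $\bar v$. Noncollapsedness keeps the profile uniformly away from zero in the interior, so these barriers can be extended across the full region $|z|\le \sqrt{2}-\eps$. For part (iii), I would argue by compactness: any subsequential blow-up limit of the $\mathrm{SO}_2$-symmetric tip profiles is an ancient noncollapsed eternal flow whose outer cross-sections approach a cylinder of radius $\sqrt{2}$, and by the classification of such translators it must be the 2d-bowl of speed $1/\sqrt{2}$. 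Uniformity is obtained by contradiction: if the estimate failed for some $\eps>0$, one would produce a sequence $M_i$ that is $\kappa_i$-quadratic at $\tau_{0,i}$ with $\kappa_i\to 0$ and $\tau_{0,i}\to-\infty$ but whose tip profile at some $\tau_i\le \tau_{0,i}$ stays a definite distance from $Z^B$; the compactness limit then yields another 2d-bowl, contradicting the deviation.

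The main obstacle is obtaining the Merle-Zaag dichotomy \emph{uniformly} in the $\kappa$-quadratic family, which unlike in \cite{ADS1} cannot rely on prior asymptotic information but must bound the nonlinear remainder in a Gaussian weighted norm purely from the noncollapsed bounds on $|v_y|,|v_{yy}|$ and the quantitative smallness at $\tau_0$; this is precisely what allows the bootstrap from a single-time hypothesis to a full backward asymptotic expansion. The matching between regions also requires care, though it is less delicate here than in Theorem \ref{thm_ADS2} because only $C^0$ control is needed in the intermediate region, so the full collar analysis of \cite{ADS2} is not required at this stage.
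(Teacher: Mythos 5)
This theorem is not proved in the present paper: it is quoted verbatim from \cite{CHH_translators} as background, and the paper uses it as black-box input (alongside Theorem~\ref{almost_quad_conc}). There is therefore no in-paper proof against which to compare your proposal.

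That said, your sketch is a reasonable high-level reconstruction of the strategy one would expect from \cite{CHH_translators}, which runs parallel to the Angenent--Daskalopoulos--Sesum program: spectral decomposition under the Ornstein--Uhlenbeck operator together with a Merle--Zaag-type dichotomy in the parabolic region, barrier constructions matching the parabolic data in the intermediate region, and a compactness/contradiction argument identifying the tip with the $2$d-bowl. The one substantive omission in your outline is the extra nonlinear term $e^\tau N[v]$ that appears in the evolution equation for the translator profile (cf.\ the equation quoted later in this paper from \cite[Prop.~5.3]{CHH_translators}); controlling this term and its derivatives is exactly what distinguishes the translator case from the ancient-oval case, and requires the derivative estimates of \cite[Lemma~5.6]{CHH_translators}. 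Your plan for the parabolic region also needs to be phrased so that the Merle--Zaag dichotomy is applied \emph{forward} from the single-time $\kappa$-quadratic hypothesis at $\tau_0$ and then propagated to all $\tau\leq\tau_0$ uniformly over the family -- you flag this as ``the main obstacle,'' correctly, but resolving it is where the real work lies and cannot be checked against the present paper, only against \cite{CHH_translators} itself.
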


Moreover, let us also recall the variant of the quadratic concavity estimate for translators in $\mathbb{R}^4$:

\begin{theorem}[{almost quadratic concavity \cite{CHH_translators}}]\label{almost_quad_conc}
There exist constants $\kappa>0$ and $\tau_\ast>-\infty$, such that if $M$ is $\kappa$-quadratic at some time $\tau_0\leq\tau_\ast$, then
\begin{equation}
v v_{yy} +v_y^2 - \frac{e^{\tau}}{v^2}\leq 0\qquad \mathrm{for}\,\, \tau\leq \tau_0.
\end{equation}
\end{theorem}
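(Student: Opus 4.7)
The plan is to adapt the maximum-principle argument of Angenent-Daskalopoulos-Sesum used in Theorem \ref{thm_ADS2} to the translator setting in $\mathbb{R}^4$. The translator-corrected quantity
\begin{equation*}
Q := v v_{yy} + v_y^2 - \frac{e^\tau}{v^2}
\end{equation*}
plays the role of the ADS quantity $vv_{yy}+v_y^2$, with the extra term $-e^\tau/v^2$ engineered so as to absorb the translator-specific contributions to the rescaled flow equation.

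First I would derive the PDE for $v(y,\tau)$. Writing $M$ locally as a graph $x_4 = F(x_1, r)$ in the $(x_1,x_2,x_3)$-coordinates, with rotational symmetry in $(x_2,x_3)$, the implicit relation $F(x_1, V(x_1, t)) = -t$ yields $V_t = -1/F_r$ and $V_{x_1} = -F_{x_1}/F_r$. Substituting into the translator equation for $F$ and then applying the rescaling $v(y,\tau) = e^{\tau/2} V(e^{-\tau/2}y, -e^{-\tau})$, one obtains a rescaled profile equation whose leading part agrees with the rescaled MCF equation for the ADS oval profile and whose translator correction is of order $e^\tau$. Computing the evolution equation for $Q$, the algebraic point is that the adjustment $-e^\tau/v^2$ is tailored precisely so that its $\tau$-derivative cancels the leading $e^\tau$-contributions that the translator correction generates in $\partial_\tau(vv_{yy}+v_y^2)$, yielding a pointwise inequality of the schematic form
\begin{equation*}
\partial_\tau Q \leq \cL Q + a(y,\tau)\, Q + (\textrm{non-positive remainder}),
\end{equation*}
where $\cL$ is a linear second-order parabolic operator and $a$ a bounded coefficient.

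Finally I would run the maximum principle to rule out $Q > 0$. The uniform asymptotics in Theorem \ref{thm_transl_asy} force $Q \leq 0$ in each of the three asymptotic regions: in the parabolic region, the $C^{\lfloor \eps^{-1}\rfloor}$-close explicit profile gives $vv_{yy}+v_y^2 \sim -c/|\tau|$, dominating $e^\tau/v^2$; in the intermediate region, the cylinder profile $\sqrt{2-z^2}$ satisfies $vv_{yy}+v_y^2 \equiv 0$ with errors controlled by $\eps$; and in the tip region, the sharp $C^{\lfloor \eps^{-1}\rfloor}$-closeness to the 2d-bowl $Z^B$ gives $Q \leq 0$, since on the bowl itself the inequality can be verified directly from the bowl ODE. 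The main obstacle will be the collar region $\{L|\tau|^{-1/2} \leq v \leq \theta\}$, where the transition between bowl-like and cylinder-like behavior is delicate and where the correction $e^\tau/v^2$, though minute, plays a decisive role. Here I would adapt the ADS barrier construction, modifying their trial functions to accommodate the extra $e^\tau/v^2$ term. Putting the region-wise bounds together with the evolution inequality for $Q$ then excludes a positive interior maximum of $Q$ at any $\tau \leq \tau_0$, provided $\kappa$ is sufficiently small and $\tau_\ast$ sufficiently negative.
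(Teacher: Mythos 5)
This statement is recalled from \cite{CHH_translators} with a citation; the present paper does not supply a proof, so there is no internal argument against which to check yours. Your plan does resemble the broad strategy one expects — derive the translator profile PDE, compute the evolution inequality for $Q=vv_{yy}+v_y^2-e^\tau/v^2$, anchor the sign, and close with a maximum principle — and the structure you sketch is consistent with the analogous argument the paper \emph{does} give in Section~4 for the tip-region Hessian estimate (anchor at the boundary of the collar, then a differential inequality at a putative bad extremum, then integrate backward in $\tau$).

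However, your region-by-region claim is not correct as stated, and it hides where the real work is. In the intermediate region, Theorem~\ref{thm_transl_asy} gives only $C^0$ closeness to the cylinder $\sqrt{2-z^2}$; that does not control $v_{yy}$, so the asymptotics alone do not force $Q\le 0$ there. (Moreover, on the cylinder one has $vv_{yy}+v_y^2 = \tfrac12(v^2)_{yy} = -1/|\tau|$, not $0$.) The point of the ADS-style maximum principle is precisely that the sharp sign in the intermediate and collar regions is \emph{not} read off from the asymptotics; it is propagated inward from the tip, where the $C^k$-closeness to the bowl does give a verifiable sign, together with a backward-in-time decay estimate at $\tau\to-\infty$ replacing any initial condition (compare the argument for Theorem~\ref{thm_hess_tip}, where $\underline{H}(t_0)<0$ is ruled out by integrating the differential inequality back to $t\to-\infty$ against the cylindrical bound \eqref{cyl_est_unresc}). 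You would also need to actually verify the cancellation you assert between $\partial_\tau(-e^\tau/v^2)$ and the $e^\tau$-order error terms coming from the translator correction in the $v$-equation; this is a genuine computation, not automatic, and the statement's peculiar form $e^\tau/v^2$ (rather than, say, $e^\tau$ or $e^\tau/v$) is dictated by how the error scales in the collar where $v$ is small. As written your sketch elides both of these essential steps.
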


In this setting of noncollapsed translators in $\mathbb{R}^4$, we prove the following gradient estimate that holds uniformly for families of $\kappa$-quadratic solutions:

\begin{theorem}[gradient estimate for translators]\label{thm:min.gradient.improved_intro_trans}
For every $\eps>0$ there exist $\kappa>0$, $L<\infty$ and $\tau_\ast>-\infty$, such that if $M$ is $\kappa$-quadratic at some time $\tau_0\leq\tau_\ast$, then 
\begin{equation}
\sup_{\bar{v}(z,\tau)\geq L|\tau|^{-1/2}} |g(z,\tau)-2| \leq \eps\qquad \mathrm{for}\,\, \tau\leq \tau_0.
\end{equation}
\end{theorem}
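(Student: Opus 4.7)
The plan is to parallel the proof of Theorem \ref{thm:min.gradient.improved_intro}, but with every estimate made uniform across the family of $\kappa$-quadratic translators by replacing the oval inputs by Theorems \ref{thm_transl_asy} (uniform asymptotics) and \ref{almost_quad_conc} (almost quadratic concavity). First, I decompose the set $\{\bar v(z,\tau)\geq L|\tau|^{-1/2}\}$ into four overlapping pieces: the parabolic region $|y|\leq K$, the intermediate interior $\eps\leq z\leq \sqrt{2}-\eps$, the tip region $\rho\leq \eps^{-1}$, and the two transition pieces that bridge them, namely the \emph{belt} $K\leq y\leq \delta|\tau|^{1/2}$ between the parabolic and intermediate regions, and the \emph{collar} $L|\tau|^{-1/2}\leq \bar v\leq \theta$ between the intermediate and tip regions.

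In the parabolic and tip regions the uniform $C^{\lfloor\eps^{-1}\rfloor}$-asymptotics of Theorem \ref{thm_transl_asy} directly yield pointwise expansions of $v_y$ and $Z_\rho$, from which $|g-2|\leq C\eps$ follows after choosing $L$ large enough that the bowl gradient function $g_B$ satisfies $|g_B-2|\leq \eps/2$ on $\{\rho\geq L\}$. In the intermediate interior, Theorem \ref{almost_quad_conc} gives $(\bar v^2)_{zz}\leq 2e^{\tau}|\tau|^{3}/L^{2}$ on $\{\bar v\geq L|\tau|^{-1/2}\}$, so $\bar v^2$ is concave up to an exponentially small error; combined with the $C^0$-convergence $\bar v^2\to 2-z^2$, the classical principle that uniform limits of concave functions converge in $C^1$ on compact subsets of the interior yields $|g-2|\leq \eps$ for $\tau\leq\tau_\ast$.

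The main obstacle is the belt/collar region, where the $C^k$-asymptotics of the parabolic/tip regions no longer apply and the intermediate $C^0$-asymptotics are too weak on their own. Here the plan is to run the same barrier/maximum-principle argument used for ancient ovals in Theorem \ref{thm:min.gradient.improved_intro}: set up an auxiliary function combining $g-2$ with an explicit error barrier whose evolution inequality is sign-controlled by the almost-quadratic-concavity, and propagate the already-established $C^1$-control inward from the parabolic and tip boundaries. The only new structural issue compared with the oval setting is that the flow exists only for $\tau\leq\tau_0$, so the parabolic comparison principle must be applied on the half-infinite interval $(-\infty,\tau_0]$; since Theorem \ref{thm_transl_asy} provides uniform $C^k$-control on the spatial boundary of the belt/collar, the boundary terms cause no new trouble.

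The hard part will be the uniform calibration in this belt/collar analysis: the almost-concavity bound degenerates as $\bar v\to L|\tau|^{-1/2}$, so the barrier must be tuned against the error term $e^{\tau}/v^2$ carefully enough that the resulting constant stays uniform across all $\kappa$-quadratic translators. Once this is in place, the gluing is routine: for a prescribed $\eps>0$, choose $K$, $L$, $\delta$, $\theta$ so that each of the four regional estimates contributes at most $\eps$, and then shrink $\kappa$ and choose $\tau_\ast$ sufficiently negative so that Theorems \ref{thm_transl_asy} and \ref{almost_quad_conc} apply with the required accuracy.
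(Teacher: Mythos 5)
Your proposal follows the same overall template as the paper: establish the estimate in the parabolic, intermediate, and tip regions via Theorems \ref{thm_transl_asy} and \ref{almost_quad_conc}, then handle the remaining transition region(s) by a maximum-principle argument with the auxiliary functions $g^\pm=(1\pm z)g$, now carrying along the extra error $E$ coming from the translator nonlinearity $e^\tau N[v]$. The final contradiction argument is run on $(-\infty,\tau_0]$ in both cases (your remark that the flow ``exists only for $\tau\leq\tau_0$'' is not quite right---the translator level-set flow is eternal; what is restricted to $\tau\leq\tau_0$ is the validity of the $\kappa$-quadratic asymptotics---but this causes no difference in the argument).

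There is however one substantive mismatch: you place the collar $\{L|\tau|^{-1/2}\leq \bar v\leq\theta\}$ together with the belt as a region to be handled by the barrier/maximum-principle argument. The paper treats the collar in the analogue of Proposition \ref{lem:gradient.conv.tale} (gradient in good regions), not via Lemma \ref{lem:min.gradient}. This matters because the maximum-principle computation in Lemma \ref{lem:min.gradient} relies on $|q_z|\leq 3\delta$ being small, which holds in the belt (where $z\leq\delta$) but fails in the collar, where $|q_z|$ is of order $2\sqrt 2$. Likewise the derived bound $|g^\pm|\leq\delta|\tau|^{1/2}$, the absorption of the $q_z^2$-terms, and the control of the error term all assume belt-scale gradients. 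The correct mechanism in the collar is the one from the proof of Proposition \ref{lem:gradient.conv.tale}: use the formula $q_z=2\rho/Z_\rho$ and the bowl asymptotics $Z^B_\rho(\rho)=-\rho/\sqrt2+O(\rho^{-1})$ to get $q_z\geq-2\sqrt2-o(1)$ at the inner end $\rho=L$, and then propagate this via (almost) concavity of $q$, which makes $q_z$ nearly monotone; combined with $z\to\sqrt2$ from the diameter asymptotics this already gives $|g-2|=o(1)$ throughout the collar, with no PDE argument needed there. Related to this, your phrase ``the almost-concavity bound degenerates as $\bar v\to L|\tau|^{-1/2}$'' slightly misattributes the difficulty: the error $e^\tau/v^2$ in Theorem \ref{almost_quad_conc} remains exponentially small on $\{\bar v\geq L|\tau|^{-1/2}\}$ (at worst $e^\tau|\tau|^2/L^2$); the error you actually need to control in the maximum-principle step is $E$, which the paper bounds by $e^{\frac{99}{100}\tau}$ in the belt, and which is compared with $g$ using the coarse lower bound $g\geq|\tau|^{-1}$ there---a step your proposal omits. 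Once the collar is handled by the concavity/tip argument and the maximum principle is restricted to the belt with the coarse lower bound on $g$ inserted, your plan matches the paper's proof.
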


Next, we have the following uniform Hessian estimate:

\begin{theorem}[Hessian estimate for translators]\label{thm:hessian.improved_intro_trans}
There exist constants $\kappa>0$, $\tau_\ast>-\infty$ and $C<\infty$, such that if $M$ is $\kappa$-quadratic at some time $\tau_0\leq\tau_\ast$, then
\begin{equation}
v v_{yy} +C\left(v_y^2+\frac{1}{|\tau|}\right) \geq 0\qquad \mathrm{for}\,\, \tau\leq \tau_0.
\end{equation}
\end{theorem}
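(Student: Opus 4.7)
The plan is to adapt to the translator setting the maximum principle argument that proves the oval case, Theorem~\ref{thm:hessian.improved_intro}. Set
\begin{equation*}
Q(y,\tau):= v v_{yy} + C\bigl( v_y^2 + 1/|\tau|\bigr),
\end{equation*}
where $C$ is a large universal constant to be fixed below, and aim to show $Q\geq 0$ on the entire domain of $v$ for all $\tau\leq\tau_0$. From the translator PDE for $V$ one derives a parabolic equation $v_\tau = \mathcal{M}[v] + R$, where $\mathcal{M}$ is the operator that governs the oval case and $R=O(e^\tau/v^k)$ is an exponentially small translator correction. Differentiating twice in $y$ yields an equation
\begin{equation*}
(\partial_\tau - \mathcal{L})Q = \mathcal{F}(v,v_y,v_{yy}) + O(e^\tau),
\end{equation*}
in which the main terms $\mathcal{F}$ match those of the oval case. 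Throughout, Theorem~\ref{almost_quad_conc} (almost quadratic concavity) plays the role of a complementary upper bound $v v_{yy}\leq -v_y^2 + e^\tau/v^2$ that lets us eliminate $v_{yy}$ at any contact point.

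I would then verify $Q\geq 0$ in the parabolic and tip regions of Theorem~\ref{thm_transl_asy}. In the parabolic region $\{|y|\leq K\}$, the uniform asymptotics give $v v_{yy} = -1/|\tau| + o(|\tau|^{-1})$ and $v_y^2 = O(|\tau|^{-2})$, so $Q \geq (C-1)/|\tau| + o(|\tau|^{-1}) > 0$ for $C\geq 2$ and $|\tau_0|$ large. In the tip region $\{ v\leq L|\tau|^{-1/2}\}$, the $C^k$-convergence to the $2$d-bowl together with the explicit convexity of the bowl graph (after squashing) yields $v v_{yy}\geq -o(|\tau|^{-1})$, again absorbed by $C/|\tau|$. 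At the initial time $\tau=\tau_0$, the $\kappa$-quadratic hypothesis combined with the uniform asymptotics provides the remaining bound $Q(\cdot,\tau_0)\geq 0$ on the collar, after choosing $\kappa$ small enough.

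The main obstacle, as in the oval case, is to propagate $Q\geq 0$ into the collar region for $\tau<\tau_0$. Suppose for contradiction that $Q$ first touches zero at some $(y_\ast,\tau_\ast)$ in the collar. There, $Q=Q_y=0$, $Q_{yy}\geq 0$, and $Q_\tau\leq 0$. Solving $Q=0$ for $v_{yy}$ and inserting into the evolution equation, together with the upper bound from Theorem~\ref{almost_quad_conc}, one computes $(\partial_\tau-\mathcal{L})Q$ at $(y_\ast,\tau_\ast)$; the leading contribution should be a strictly positive multiple of $C/|\tau_\ast|^2$ that, for $C$ large, dominates the remaining error terms of size $o(|\tau_\ast|^{-2})$ as well as the translator correction of size $O(e^{\tau_\ast})$, which is exponentially smaller. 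This yields the desired contradiction. The delicate point, exactly as in the proof of Theorem~\ref{almost_quad_conc} itself, is that derivative estimates in the collar are weak, so one must carefully track how each term scales in $v$, $v_y$, and $1/|\tau|$; fixing a universal $C$ that absorbs all of these is where most of the work lies, while the adaptation from the oval case amounts essentially to verifying that the translator correction is of lower order.
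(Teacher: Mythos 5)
Your proposal takes a genuinely different route from the paper's, and unfortunately the route you sketch has a gap at exactly the step where all the difficulty lies. The paper does \emph{not} prove Theorem~\ref{thm:hessian.improved_intro_trans} by a single maximum principle on the displayed quantity. Instead, it decomposes the problem into two separate maximum-principle arguments with two \emph{different} test functions in two \emph{different} parameterizations: in the intermediate region it works in the rescaled variable $z$ with $h^\ast:=(2-|z|)(-q^2q_{zz}+qq_z^2)$ (Theorem~\ref{thm:Hessian.cyl.esti_trans}), obtaining $\bar v_{zz}\geq -1000/\bar v^5$; in the tip/collar region it works in the \emph{unrescaled} variables with $H:=(V^p)_{xx}$ for a \emph{large} power $p$ (Theorem~\ref{thm_hess_tip_trans}), obtaining $VV_{xx}+(p-1)V_x^2\geq 0$. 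The final estimate with the $1/|\tau|$ term is only assembled \emph{after the fact} from these two outputs. The crucial structural observation driving the collar argument is that $(V^p)_{xx}$ becomes a \emph{super}solution once $p$ is taken sufficiently large (in contrast to $p=2$, which yields a \emph{sub}solution as in Angenent--Daskalopoulos--Sesum), via the favorable term $-(p-2)Q^{-2/p}H$ in the evolution of $H$; this is the insight you would need but do not mention.

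By contrast, your plan is to run one maximum principle directly on $Q:=vv_{yy}+C(v_y^2+1/|\tau|)$ in the $(y,\tau)$ variables. Your boundary checks in the parabolic and tip regions are fine, but the heart of the argument --- that $(\partial_\tau-\mathcal L)Q$ is strictly positive at an interior contact point in the collar, with leading term $\sim C/|\tau_\ast|^2$ --- is asserted without computation, and there is no a priori reason to expect it to close: at the contact point $Q=0$ one has $vv_{yy}=-C(v_y^2+1/|\tau|)$, and the favorable reaction term analogous to $-(p-2)Q^{-2/p}H$ degenerates because it is proportional to the very quantity you have set to zero. Whether the remaining terms (notably the $Q^{-1-2/p}Q_x^2$-type positive term, the transport terms, the error $R$ coming from cylindrical estimates, and the derivative of $1/|\tau|$) can be balanced with a universal $C$ is exactly the work that the paper's choice of variables and exponent was designed to make tractable, and it is not addressed in the proposal. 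There is also a secondary confusion in your handling of the initial time: the $\kappa$-quadratic hypothesis and the uniform asymptotics of Theorem~\ref{thm_transl_asy} control the parabolic, intermediate, and tip regions but say nothing about $C^2$ bounds in the collar at time $\tau_0$; moreover, your ``first touch'' framing would also need $Q\geq 0$ in the collar as $\tau\to-\infty$, which is not established. The paper avoids both issues by using a Gronwall-type argument: at an interior minimum it derives a differential inequality for $\underline H(t)$, integrates it backward, and contradicts the a priori cylindrical estimate $|Q_{xx}|\leq C\varepsilon Q^{1-2/p}$ --- no boundary condition in time is invoked.
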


In particular, this can be viewed as converse of Theorem \ref{almost_quad_conc} (almost quadratic concavity). Together with standard interior estimates this yields the corresponding higher derivative estimates as well:

\begin{corollary}[higher derivative estimates for translators]
There exist constants $\kappa>0$, $\tau_\ast>-\infty$, $L<\infty$ and $C_k<\infty$ for $k\geq 2$, such that if $M$ is $\kappa$-quadratic at some time $\tau_0\leq\tau_\ast$, then
\begin{equation}
v^{k-1}\left| \partial^k_y v\right| \leq C_k \left(v_y^2+\frac{1}{|\tau|}\right)\qquad\qquad \mathrm{for}\,\, \,  v(y,\tau)\geq L|\tau|^{-1/2}, \,\, \tau\leq \tau_0.
\end{equation}
\end{corollary}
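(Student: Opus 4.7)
The plan is to derive the $k=2$ case from the results already established and then bootstrap to $k\geq 3$ by applying standard interior Schauder estimates at the natural scale.

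The $k=2$ case follows by combining the Hessian estimate (Theorem~\ref{thm:hessian.improved_intro_trans}), which gives $vv_{yy}\geq -C(v_y^2+1/|\tau|)$, with the almost quadratic concavity (Theorem~\ref{almost_quad_conc}), which gives $vv_{yy}\leq -v_y^2+e^\tau/v^2$. Since $v\geq L|\tau|^{-1/2}$ and $\tau\leq\tau_0\ll 0$, the error $e^\tau/v^2\leq e^\tau|\tau|/L^2$ is exponentially small and easily absorbed into $1/|\tau|$; hence $v|v_{yy}|\leq C(v_y^2+1/|\tau|)$.

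For $k\geq 3$ I would pass to the non-renormalized profile function $V(x,t)$ of the cross sections $M\cap\{x_4=-t\}$, related by $v(y,\tau)=e^{\tau/2}V(e^{-\tau/2}y,-e^{-\tau})$. A direct chain-rule computation gives $v_y=V_x$ and $v^{k-1}\partial_y^k v = V^{k-1}\partial_x^k V$ at corresponding points, so the claim reduces to $V_0^{k-1}|\partial_x^k V(x_0,t_0)|\leq C_k(V_x^2+1/|\tau_0|)$ at any point with $V_0:=V(x_0,t_0)$. Since $V$ is the rotationally symmetric profile of the smooth translator $M\subset\mathbb{R}^4$, it satisfies a quasilinear second-order PDE (derived from the translator equation $H=\langle e_4,\nu\rangle$) which becomes uniformly elliptic in $(x,t)$ once $|V_x|,|V_t|$ are bounded. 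Rescaling $\tilde V(x,t):=V_0^{-1}V(x_0+V_0 x, t_0+V_0 t)$ yields a solution of a PDE of the same type on the unit ball $B_1$, with $\tilde V(0,0)=1$, $|\tilde V_x|\leq C$ (gradient estimate, Theorem~\ref{thm:min.gradient.improved_intro_trans}), and $|\tilde V_{xx}|\leq C\eps$ with $\eps:=V_x^2+1/|\tau_0|$ by the $k=2$ case. Differentiating the equation twice in $x$, the quantity $W:=V_{xx}$ satisfies a linear uniformly elliptic PDE with smooth coefficients; iterated interior Schauder gives $\|W\|_{C^{k-2}(B_{1/2})}\leq C_k\|W\|_{C^0(B_1)}\leq C_k'\eps$, and undoing the rescaling via $\partial_x^k\tilde V = V_0^{k-1}\partial_x^k V$ completes the bound.

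The main technical point will be verifying that $B_1$ around $(x_0,t_0)$ lies in the region where the $k=2$ estimate is valid, i.e.\ that the corresponding renormalized points $(y,\tau)$ satisfy $v(y,\tau)\geq L'|\tau|^{-1/2}$ for some $L'<L$. In $(y,\tau)$ coordinates this ball has $y$-width $\lesssim v_0$ and $\tau$-width $\lesssim 1$; the gradient estimate forces $|v_y|\lesssim 1/L$, so $v$ varies spatially by at most $Cv_0/L$, while the translator equation together with the available $C^0,C^1,C^2$ bounds forces $|v_\tau|\lesssim 1/|\tau|$, so time variation is negligible compared to $v_0$. For $L$ large enough, $v\geq v_0/2$ throughout $B_1$, and the Schauder bootstrap then proceeds routinely.
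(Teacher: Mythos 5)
Your approach is essentially the one the paper has in mind: the paper states the corollary follows from the Hessian estimate ``together with standard interior estimates,'' which is precisely the rescale-and-Schauder bootstrap you carry out. The $k=2$ deduction (combining the Hessian lower bound with the almost quadratic concavity upper bound, and absorbing $e^\tau/v^2 \leq e^\tau|\tau|/L^2 \leq |\tau|^{-1}$ in the region $v\geq L|\tau|^{-1/2}$) is correct, as is the chain-rule identity $v^{k-1}\partial_y^k v = V^{k-1}\partial_x^k V$.

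Two small points worth tightening. First, in the rescaling you write $\tilde V(x,t)=V_0^{-1}V(x_0+V_0 x, t_0+V_0 t)$; this should be $t_0+V_0^2 t$ so that the quasilinear parabolic equation $V_t=\frac{V_{xx}}{1+V_x^2}-\frac{1}{V}+\cdots$ is scale-invariant. Second, when you assert that ``iterated interior Schauder gives $\|W\|_{C^{k-2}(B_{1/2})}\leq C_k\|W\|_{C^0(B_1)}$,'' you have implicitly dropped the inhomogeneous term: differentiating the PDE twice produces a linear equation for $W=\tilde V_{xx}$ of the form $W_t = aW_{xx}+bW_x+cW+f$, where $a=(1+\tilde V_x^2)^{-1}$ and $f$ contains, besides contributions from the translator error, the term $F_{VV}\tilde V_x^2 = -2\tilde V_x^2/\tilde V^3$. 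One must note that $|f|\lesssim \eps$ and that its higher $x$-derivatives are also $O(\eps)$ (this follows inductively from the already-established a priori $C^k$-bounds on $\tilde V$ together with the smallness of $\tilde V_x$ and $\tilde V_{xx}$), so the Schauder constant picks up $\|f\|_{C^{k-2,\alpha}}\lesssim\eps$ rather than zero. This does not change the conclusion, but it is the step one would need to spell out in a careful write-up.
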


\bigskip

Let us now outline the main ideas of the proofs. To begin with, recall from \cite{ADS1} that the squashed profile function of ancient ovals in $\mathbb{R}^3$ evolves by
\begin{equation}\label{eq:bar_v.evol_intro}
\bar v_\tau= \frac{\bar v_{zz}}{|\tau|+\bar v_z^2}-\frac{z}{2}\left(1+\frac{1}{|\tau|}\right)\bar v_z+\frac{\bar v}{2}-\frac{1}{\bar v}. 
\end{equation}
In particular, observe that the ellipticity degenerates as $\tau\to -\infty$, which is of course expected in light of the squashing. Hence, all standard interior estimates inevitably loose some $|\tau|$-factors, and the only hope to derive sharp estimates is to cook up suitable quantities that satisfy a nice maximum principle.

Regarding the gradient estimate, we first observe that the statement away from the belt region follows quite easily from Theorem \ref{thm_ADS1} (unique asymptotics) and convexity. To deal with the belt region $\{\sqrt{2}-K^2|\tau|^{-1}\geq v \geq \sqrt{2}-\theta\}$ we then apply the maximum principle to the functions
\begin{equation}
g^\pm :=(1\pm z)g.
\end{equation}

Regarding the Hessian estimate, we first consider the quantity
\begin{equation}
h^\ast:=(2-|z|)(-q^2q_{zz}+qq_z^2),
\end{equation}
and use our gradient estimate to control various term in the resulting evolution equation. Applying the maximum principle we then derive the intermediary Hessian estimate
\begin{equation}
\bar{v}_{zz}\geq -\frac{1000}{\bar{v}^5}.
\end{equation}
This estimate is pretty good in the intermediate region, but far from optimal in the collar region. To deal with the collar region, working with the unrescaled profile function $V(x,t)$ for convenience, we then consider the quantity
\begin{equation}
H=(V^p)_{xx}.
\end{equation}
Choosing the exponent $p$ sufficiently large we show that $H$ is a supersolution of a suitable parabolic equation (this is in contrast to \cite{ADS2} who got a subsolution for $p=2$). Using our prior estimates to control the sign at the boundary, we can then conclude the proof of the Hessian estimate.\\

Finally, the case of noncollapsed translators involves two modifications of the above. First, we have to control some additional nonlinear error terms in the evolution of $\bar{v}$. Second, we have to check that all estimates are uniform for families of $\kappa$-quadratic solutions. This concludes the outline of the proof.\\

\bigskip

\noindent\textbf{Acknowledgments.}
KC has been supported by KIAS Individual Grant MG078901 and a POSCO Science Fellowship. RH has been supported by an NSERC Discovery Grant and a Sloan Research Fellowship. OH has been supported by ISF grant 437/20.

\bigskip

\section{Gradient estimate}

Denoting by $v(y,\tau)$ the profile function of $\bar{M}_\tau$, we work with the squashed profile function 
\begin{equation}
\bar{v}(z,\tau):=v(|\tau|^{1/2}z,\tau).
\end{equation}
We consider the function
\begin{equation}
g:=-\frac{1}{z}q_z,\qquad \mathrm{where}\qquad q=\bar{v}^2.
\end{equation}
Let us first observe that the function $g$ indeed has the expected behaviour away from some small regions:

\begin{proposition}[gradient in good regions]\label{lem:gradient.conv.tale} For any constant $\theta>0$ we have
\begin{equation}\label{grad_good1}
\lim_{L\to\infty}\limsup_{\tau\to -\infty}   \sup_{\bar{v}(z,\tau)\in [L|\tau|^{-1/2},\sqrt{2}-\theta]} |g(z,\tau)-2|=0.
\end{equation}
Moreover, for any constant $K<\infty$ we have
\begin{equation}\label{eq:zq_z.tale}
\lim_{\tau\to -\infty}   \sup_{ z\in  [0,K|\tau|^{-1/2}]} |g(z,\tau)-2|=0.
\end{equation}
\end{proposition}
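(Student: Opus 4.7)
The plan is to treat the two parts separately: part (\ref{eq:zq_z.tale}) follows from the parabolic $C^k$ asymptotic in Theorem~\ref{thm_ADS1}(i), while part (\ref{grad_good1}) requires combining the intermediate and tip asymptotics of Theorem~\ref{thm_ADS1} with the concavity of $q := \bar{v}^2$ from Theorem~\ref{thm_ADS2}.

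For part (\ref{eq:zq_z.tale}), I apply Theorem~\ref{thm_ADS1}(i) with some $k\geq 2$ and write $v(y,\tau) = \sqrt{2} - \frac{y^2-2}{\sqrt{8}|\tau|} + \phi(y,\tau)$, where $\|\phi\|_{C^2(|y|\leq K)} = o(|\tau|^{-1})$. Axial symmetry forces $v_y(0,\tau) = 0$, and the model term also vanishes at $y=0$, so $\phi_y(0,\tau) = 0$; a mean-value bound then gives $|\phi_y(y,\tau)/y| \leq \sup|\phi_{yy}| = o(|\tau|^{-1})$ uniformly on $|y|\leq K$. Passing to the squashed variable $y=|\tau|^{1/2}z$ yields $\bar{v}_z/z = -1/\sqrt{2} + o(1)$ uniformly on $z\in[0,K|\tau|^{-1/2}]$, and combined with $\bar{v}\to\sqrt{2}$ on the same range this gives $g = -2\bar{v}\bar{v}_z/z \to 2$.

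For part (\ref{grad_good1}), by symmetry it suffices to consider $z \geq 0$. Let $z_\theta$ and $z_L(\tau)$ denote the values of $z$ with $\bar{v}(z_\theta,\tau) = \sqrt{2}-\theta$ and $\bar{v}(z_L,\tau) = L|\tau|^{-1/2}$, and fix a small $\delta>0$, splitting the relevant region into an \emph{interior piece} $[z_\theta,\sqrt{2}-\delta]$ and a \emph{collar piece} $[\sqrt{2}-\delta,z_L]$. On the interior piece, the $C^0$-convergence $q(z,\tau)\to 2-z^2$ together with the concavity of $q(\cdot,\tau)$ yields, via a Dini-type argument on one-sided difference quotients, that $q_z(\cdot,\tau)\to -2z$ uniformly on compact subsets of $(-\sqrt{2},\sqrt{2})$; hence $g\to 2$ uniformly on this piece.

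On the collar piece, the idea is to use concavity a second time, as a sandwich: since $q_z(\cdot,\tau)$ is non-increasing, $q_z(z_L,\tau)\leq q_z(z,\tau)\leq q_z(\sqrt{2}-\delta,\tau)$ for $z\in[\sqrt{2}-\delta,z_L]$. The right endpoint converges to $-(2\sqrt{2}-2\delta)$ by the interior step. The left endpoint is analysed in the tip coordinates: differentiating the defining relation of $Y$ gives $|q_z(z_L,\tau)| = 2L/|Z_\rho(L,\tau)|$, which by Theorem~\ref{thm_ADS1}(iii) converges to $2L/|Z^B_\rho(L)|$ as $\tau\to-\infty$ for each fixed $L$, and this in turn converges to $2\sqrt{2}$ as $L\to\infty$ by the cylindrical asymptotic of the $2$d-bowl. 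Since $z_L\to\sqrt{2}$, the collar becomes $[\sqrt{2}-\delta,\sqrt{2}]$ in the limit, and the sandwich then yields $\lim_{L\to\infty}\limsup_{\tau\to-\infty} |g(z,\tau)-2| \leq O(\delta)$; since $\delta$ was arbitrary, this completes the proof. The main obstacle is the collar piece, where the concavity-sandwich input from Theorem~\ref{thm_ADS2} is what allows the estimate to close.
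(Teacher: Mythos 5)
Your proof is correct and follows essentially the same approach as the paper: the parabolic $C^k$-asymptotics together with the reflection symmetry $v_y(0,\tau)=0$ handle \eqref{eq:zq_z.tale}, while for \eqref{grad_good1} the concavity of $q$ from Theorem~\ref{thm_ADS2} is used both to upgrade the intermediate-region $C^0$-convergence to locally uniform $C^1$-convergence and to transfer the tip-region control of $q_z$ (via $q_z=2\rho/Z_\rho$ and the bowl asymptotics) into the collar. The only cosmetic difference is that you write out the two-sided concavity sandwich on the collar explicitly, whereas the paper records only the $\liminf$ bound $q_z\geq-2\sqrt{2}$ and invokes the diameter asymptotics to close the argument; the underlying mechanism is identical.
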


\begin{proof}In the following we will frequently use Theorem \ref{thm_ADS1} (unique asymptotics). To begin with,
recall that for $\tau$ sufficiently negative the function $q$ is concave thanks to Theorem \ref{thm_ADS2} (quadratic concavity). Hence, the locally uniform $C^0$-convergence of $q(\cdot,\tau)$ to $2-z^2$ from the intermediate region asymptotics can be upgraded to locally uniform $C^1$-convergence, namely for any $\delta>0$ we get
\begin{equation}
\lim_{\tau\to -\infty}\sup_{|z|\leq \sqrt{2}-\delta}|q_z(z,\tau)+2z|=0.
\end{equation}
This yields
\begin{equation}\label{eq:zq_z.middle}
\lim_{\tau\to -\infty}   \sup_{ \delta \leq |z|\leq \sqrt{2}-\delta} |g(z,\tau)-2|=0.
\end{equation}
Next, to deal with the region where $|z|\geq \sqrt{2}-\delta$ and $\bar{v}(z,\tau)\geq L|\tau|^{-1/2}$, observe that
\begin{equation}
q_z(z,\tau)=\frac{2\rho}{Z_\rho(\rho)} \qquad \mathrm{where}\qquad \rho = |\tau|^{1/2}\bar{v}(z,\tau).
\end{equation}
Recall also that the profile function of the 2d-bowl with speed $1/\sqrt{2}$ satisfies
\begin{equation}
Z^B_\rho(\rho)=-\frac{1}{\sqrt{2}}\rho+O( \rho^{-1})\qquad \mathrm{for}\quad \rho\to\infty.
\end{equation}
By the tip region asymptotics, remembering also the concavity of $q$, this yields
\begin{equation}
\liminf_{L\to\infty}\liminf_{\tau\to -\infty}\inf_{\bar{v}(z,\tau)\geq L|\tau|^{-1/2} } q_z \geq -2\sqrt{2}.
\end{equation}
Combining with \eqref{eq:zq_z.middle}, and remembering also the diameter asymptotics that follow from the intermediate region asymptotics, this establishes \eqref{grad_good1}.\\
Finally, to deal with the parabolic region recall that for $|y|\leq K$ we have
\begin{equation}\label{asympt_parab}
v(y,\tau)=\sqrt{2}-\frac{y^2-2}{\sqrt{8}|\tau|}+o(|\tau|^{-1})    
\end{equation}
in the $C^k$-sense. Taking also into account that $v_y(0,\tau)=0$ thanks to the $\mathbb{Z}_2$-symmetry it follows that
\begin{equation}
   \frac{v_y}{y} =- \frac{1}{\sqrt{2}|\tau|}+o(|\tau|^{-1}). 
\end{equation}
For $|y|\leq K$ we thus conclude that
\begin{equation}
    g(|\tau|^{-1/2}y,\tau)=-\frac{2|\tau| v(y,\tau)  v_y(y,\tau)}{y}  = 2+o(1).
\end{equation}
This shows \eqref{eq:zq_z.tale}, and thus completes the proof.
\end{proof}

To deal with the belt region, namely the region where $ \sqrt{2}-K^2|\tau|^{-1}\geq v \geq \sqrt{2}-\theta$, let us begin by computing the relevant evolution equation:

\begin{proposition}[evolution of gradient function]\label{prop:evol.pseudo_hessian}
The function $g=-\frac{1}{z}q_z$ evolves by
\begin{multline}\label{eq:evol.pseudo_hessian}
    g_\tau=\frac{4qg_{zz}}{4q|\tau|+q_z^2}+\frac{8qg_{z}}{z(4q|\tau|+q_z^2)}-\frac{z}{2}\left(1+\frac{1}{|\tau|}\right)g_z -\frac{g}{|\tau|}\\
    +\frac{4\left(2q(g+zg_z)+q_z^2\right)\left(2|\tau|-(g+zg_z)\right)}{(4q|\tau|+q_z^2)^2}g.
\end{multline}
\end{proposition}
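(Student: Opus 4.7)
The plan is to derive the evolution equation for $g$ by differentiating the evolution equation for $q=\bar{v}^2$ in the $z$-direction, since $g=-q_z/z$. First I would rewrite \eqref{eq:bar_v.evol_intro} in terms of $q$. Using $q_\tau = 2\bar{v}\bar{v}_\tau$, $q_z = 2\bar{v}\bar{v}_z$, $q_{zz} = 2\bar{v}_z^2+2\bar{v}\bar{v}_{zz}$ and $\bar{v}_z^2 = q_z^2/(4q)$, a direct substitution yields
\begin{equation}
q_\tau = \frac{4q q_{zz}-2q_z^2}{4q|\tau|+q_z^2} - \frac{z}{2}\left(1+\frac{1}{|\tau|}\right)q_z + q - 2.
\end{equation}
This identity already contains the numerator $N:=4qq_{zz}-2q_z^2$ and denominator $D:=4q|\tau|+q_z^2$ that appear in the claimed formula, which is a good sign that this is the right normalization.

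Next, I would differentiate $q_\tau$ with respect to $z$ to obtain $q_{z\tau}$, and then use the identity $g_\tau = -q_{z\tau}/z$ (which follows from $g=-q_z/z$ being independent of $\tau$ only through $q$). For the transport and zeroth-order terms, it is convenient to replace $q_z$ and $q_{zz}$ by $-zg$ and $-(g+zg_z)$ respectively, so that
\begin{equation}
-\frac{1}{z}\left(-\frac{z}{2}\left(1+\tfrac{1}{|\tau|}\right)q_z+q-2\right)_z = -\frac{z}{2}\left(1+\tfrac{1}{|\tau|}\right)g_z - \frac{g}{|\tau|},
\end{equation}
which already matches the drift plus zeroth-order part of \eqref{eq:evol.pseudo_hessian}. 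The remaining work is to show that the $z$-derivative of the diffusion term $N/D$, after division by $-z$, produces the diffusive piece $4qg_{zz}/D+8qg_z/(zD)$ together with the reaction term involving $g(2|\tau|-(g+zg_z))$.

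The main technical obstacle is therefore the quotient computation $-\frac{1}{z}(N/D)_z$. I would expand $(N/D)_z = (N_z D - N D_z)/D^2$, noting that the simplification $N_z = 4q\,q_{zzz}$ occurs because the cross terms cancel, and then substitute $q_{zzz}=-2g_z-zg_{zz}$. This gives
\begin{equation}
-\frac{N_z}{zD} = \frac{4qg_{zz}}{D}+\frac{8qg_z}{zD},
\end{equation}
accounting for the elliptic and lower-order diffusion terms. For the remaining piece $ND_z/(zD^2)$, I would factor $N=-2[2q(g+zg_z)+z^2g^2]$ and $D_z = -2zg[2|\tau|-(g+zg_z)]$, so that the factor of $z$ in $D_z$ cancels the $z$ in the denominator and produces exactly the claimed reaction term $\frac{4(2q(g+zg_z)+q_z^2)(2|\tau|-(g+zg_z))}{D^2}g$, after reinstating $q_z^2=z^2g^2$. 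The only subtle point is keeping signs straight through the quotient rule; otherwise the computation is purely algebraic, and assembling the four contributions yields \eqref{eq:evol.pseudo_hessian}.
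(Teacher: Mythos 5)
Your computation is correct and follows essentially the same route as the paper: rewrite the evolution equation in terms of $q=\bar v^2$, differentiate in $z$, divide by $-z$, and substitute $q_z=-zg$, $q_{zz}=-(g+zg_z)$, $q_{zzz}=-(zg_{zz}+2g_z)$ to repackage the result in terms of $g$. The factorizations $N_z=4qq_{zzz}$, $N=-2\bigl[2q(g+zg_z)+q_z^2\bigr]$ and $D_z=-2zg\bigl[2|\tau|-(g+zg_z)\bigr]$ are exactly the simplifications implicit in the paper's final rewrite, so this is the same proof presented slightly more explicitly.
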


\begin{proof}Recall from \cite[Section 6]{ADS1} that the squashed profile function evolves by
\begin{equation}\label{eq:bar_v.evol}
\bar v_\tau= \frac{\bar v_{zz}}{|\tau|+\bar v_z^2}-\frac{z}{2}\left(1+\frac{1}{|\tau|}\right)\bar v_z+\frac{\bar v}{2}-\frac{1}{\bar v}. 
\end{equation}
Multiplying this by $2 \bar v$ yields
\begin{equation}
(\bar v^2)_\tau= \frac{(\bar v^2)_{zz}-2\bar v_z^2}{|\tau|+\bar v_z^2} -\frac{z}{2} \left(1+\frac{1}{|\tau|}\right)(\bar v^2)_z+ \bar v^2-2,
\end{equation}  
which can be rewritten in the form
\begin{equation}\label{eq:q.evol}
q_\tau= \frac{4qq_{zz}-2q_z^2}{4q|\tau|+q_z^2} -\frac{z}{2} \left(1+\frac{1}{|\tau|}\right)q_z+ q-2.
\end{equation}
Differentiating this with respect to $z$ implies
\begin{equation}\label{eq:q_z.evol}
q_{z\tau}= \frac{4qq_{zzz}}{4q|\tau|+q_z^2} -\frac{z}{2} \left(1+\frac{1}{|\tau|}\right)q_{zz}+\frac{1}{2} \left(1-\frac{1}{|\tau|}\right)q_z-\frac{(4qq_{zz}-2q_z^2)(4|\tau|+2q_{zz})}{(4q|\tau|+q_z^2)^2}q_z. 
\end{equation}
Finally, dividing this by $-z$ yields
\begin{equation}
g_\tau=-\frac{4z^{-1}qq_{zzz}}{4q|\tau|+q_z^2} +\frac{1}{2} \left(1+\frac{1}{|\tau|}\right)q_{zz}+\frac{1}{2} \left(1-\frac{1}{|\tau|}\right)g-\frac{4(2qq_{zz}-q_z^2)(2|\tau|+q_{zz})}{(4q|\tau|+q_z^2)^2}g. 
\end{equation}
Observing also that $q_{zz}=-(zg_z+g)$ and $q_{zzz}=-(zg_{zz}+2g_z)$, this implies the assertion.
 \end{proof}

We now consider the functions
\begin{equation}
g^\pm:=(1 \pm z)g.
\end{equation}

\begin{lemma}[differential inequalities for gradient functions]\label{lem:min.gradient}
Given any sufficiently small $\delta>0$, there exists $\tau_\ast> -\infty$ with the following significance:
\begin{enumerate}[(i)]
\item If $g^-$ attains its local maximum at $(z_0,\tau_0)$ satisfying $\tau_0\leq \tau_\ast$ and $5|\tau_0|^{-\frac{1}{2}}\leq z_0 \leq \delta$, then  
\begin{equation}
    g^-_\tau(z_0,\tau_0) \leq - |\tau_0|^{-\frac{1}{2}}g^- (z_0,\tau_0).
\end{equation}\label{diff_i}
\item If $g^+$ attains its local minimum at $(z_0,\tau_0)$ satisfying $\tau_0\leq \tau_\ast$ and $5|\tau_0|^{-\frac{1}{2}}\leq z_0 \leq \delta$, then 
\begin{equation}
    g^+_\tau (z_0,\tau_0) \geq  |\tau_0|^{-\frac{1}{2}}g^+(z_0,\tau_0).
    \end{equation}\label{diff_ii}
\end{enumerate}
\end{lemma}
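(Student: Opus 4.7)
The plan is to apply the parabolic maximum principle at the putative extremum using the evolution equation for $g$ from Proposition \ref{prop:evol.pseudo_hessian}. I will describe the argument for (i); assertion (ii) is the mirror case, with $g_z = -g/(1+z)$ and $g_{zz} \ge 2g/(1+z)^2$ at the minimum of $g^+$, and all signs below flipped.

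At a local maximum of $g^- = (1-z)g$ at $(z_0,\tau_0)$, the product rule turns the extremum conditions $g^-_z = 0$ and $g^-_{zz} \le 0$ into
\begin{equation*}
g_z(z_0,\tau_0) = \frac{g}{1-z_0}, \qquad g_{zz}(z_0,\tau_0) \le \frac{2g}{(1-z_0)^2}.
\end{equation*}
Combined with the identity $g = -q_z/z$ these give $q_z = -z_0 g$, $q_{zz} = -g/(1-z_0)$, and the useful relation $g + z_0 g_z = g/(1-z_0)$, which collapses the factor $(g+zg_z)$ appearing in the nonlinear last term of Proposition \ref{prop:evol.pseudo_hessian}. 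I then substitute these into the evolution formula for $g$, using the upper bound for $g_{zz}$ (valid because the coefficient $4q/(4q|\tau|+q_z^2)$ is positive), and multiply through by $(1-z_0) > 0$. Setting $D := 4q|\tau| + z_0^2 g^2$ and grouping, this produces an upper bound of the shape $g^-_\tau \le \mathcal L \cdot g^- + \mathcal N$ with
\begin{equation*}
\mathcal L = \frac{8q}{(1-z_0)^2 D} + \frac{8q}{z_0(1-z_0)D} - \frac{z_0(1+1/|\tau|)}{2(1-z_0)} - \frac{1}{|\tau|},
\end{equation*}
and $\mathcal N$ collecting the quadratic-in-$g$ contribution from the fifth term of Proposition \ref{prop:evol.pseudo_hessian}.

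The hypothesis $z_0 \ge 5|\tau_0|^{-1/2}$ makes the transport part of $\mathcal L$ the dominant term: it is $\le -\tfrac{5}{2}|\tau_0|^{-1/2}$. Using $D \ge 4q|\tau|$, the two diffusive contributions are bounded by $O(1/|\tau|)$ and $\tfrac{2}{5(1-\delta)}|\tau|^{-1/2}$ respectively, while the reaction term is $O(1/|\tau|)$; hence $\mathcal L \le -2|\tau_0|^{-1/2}$ for $\delta$ small and $\tau_0$ sufficiently negative.

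The main obstacle is the nonlinear term $\mathcal N = 4(1-z_0)g^2(2q/(1-z_0)+z_0^2 g)(2|\tau|-g/(1-z_0))/D^2$, which requires a pointwise bound on $g$ at the extremum. To get such a bound I plan to use the concavity of $q$ from Theorem \ref{thm_ADS2}: since $q_z$ is decreasing with $q_z(0) = 0$, one has $|q_z(z_0)| \le |q_z(z^\ast)|$ for $z^\ast$ chosen with $\bar v(z^\ast,\tau) = \sqrt 2 - \theta$, and by the intermediate-region asymptotics in Theorem \ref{thm_ADS1} combined with concavity, $|q_z(z^\ast)| \le 3z^\ast$ for $\tau$ negative enough. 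Taking $\theta = \delta^2/(2\sqrt 2)$ gives $z^\ast \approx \delta$, so $g(z_0,\tau_0) \le 3\delta/z_0 \le (3\delta/5)|\tau|^{1/2}$ in our range. Plugging this into $\mathcal N$ and using $D \ge 4q|\tau|$ yields $\mathcal N/g^- \le O(\delta)|\tau|^{-1/2}$. Combining with the linear estimate, $g^-_\tau/g^- \le -(2 - O(\delta))|\tau_0|^{-1/2} \le -|\tau_0|^{-1/2}$ for $\delta$ sufficiently small and $\tau_0 \le \tau_\ast$, which establishes (i).
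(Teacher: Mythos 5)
Your proposal is correct and follows the same strategy as the paper: exploiting the first- and second-derivative conditions at the extremum of $g^\pm$ to rewrite $g_z$, $q_z$, $q_{zz}$, and in particular to collapse $g+zg_z$ to $(1\pm z_0)^{-2}g^\pm$, then multiplying the evolution equation by $(1\pm z_0)$ and estimating the linear and nonlinear parts separately, with the transport term $-\tfrac{z_0}{2(1\pm z_0)}(1+|\tau|^{-1})$ providing the dominant negative contribution once $z_0\geq 5|\tau_0|^{-1/2}$. Your pointwise bound $g\lesssim \delta|\tau|^{1/2}$ in the belt region via concavity and the intermediate-region asymptotics is also the paper's bound, just reproved directly rather than cited from Proposition \ref{lem:gradient.conv.tale}. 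The only place you diverge cosmetically is case (ii): the paper notes that the nonlinear term already has the favorable sign at a minimum of $g^+$ and drops it, whereas your ``mirror the estimate'' remark would re-derive a bound for it; both work, though the paper's observation is cleaner.
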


\begin{proof}
Observe first that at any local maximum of $ \mp g^\pm(\cdot,\tau)$ we have
\begin{align}\label{eq:h.pm.critical}
&\mp (1 \pm z)g_z=  g, &&  \mp (1 \pm z)g_{zz}\leq \mp  2(1\pm z)^{-1}g.
\end{align}
We now multiply the evolution equation from Proposition \ref{prop:evol.pseudo_hessian} (evolution of gradient function) by $\mp( 1\pm z)$. Thanks to \eqref{eq:h.pm.critical} the resulting first two terms satisfy
\begin{align}
\mp\frac{4q(1 \pm z)g_{zz}}{4q|\tau|+q_z^2}\mp\frac{8q(1 \pm z)g_z}{z(4q|\tau|+q_z^2)}&\leq  \frac{8qg }{z(1 \pm z) (4q|\tau|+q_z^2)}
\end{align}
at any local maximum of $\mp g^\pm(\cdot,\tau)$ in $\{5|\tau|^{-1/2}\leq z\leq \delta\}$. Observing also that $g+zg_z=(1\pm z)^{-2}g^\pm$ again thanks to \eqref{eq:h.pm.critical}, at any such local maximum we thus get
\begin{multline}\label{loc_min_ineq}
   \mp g^\pm_\tau\leq \frac{8qg^\pm }{z(1 \pm z)^2 (4q|\tau|+q_z^2)}-\frac{z}{2(1\pm z)}\left(1+\frac{1}{|\tau|}\right)g^\pm \pm \frac{g^\pm}{|\tau|}\\
    \qquad \mp\frac{4(2qg^\pm+(1 \pm  z)^2 q_z^2)(2(1 \pm  z)^2|\tau|-g^\pm)}{(1 \pm  z)^4(4q|\tau|+q_z^2)^2}g^\pm.
\end{multline}
To proceed, observe that thanks to Proposition \ref{lem:gradient.conv.tale} (gradient in good regions) and convexity for $5|\tau|^{-1/2}\leq z\leq \delta$, with $\delta$ sufficiently small and $\tau$ sufficiently negative, we have $|q_z|\leq 3\delta$, and thus
\begin{equation}\label{growthbound_gpm}
|g^\pm| \leq \delta |\tau|^{1/2}.
\end{equation}
Now, at any local maximum of $g^-$ in $\{5|\tau|^{-1/2}\leq z\leq \delta\}$ for $\tau$ sufficiently negative we can estimate
\begin{align}
    \frac{8qg^- }{z(1 - z)^2 (4q|\tau|+q_z^2)}-\frac{z}{2(1- z)}\left(1+\frac{1}{|\tau|}\right)g^- - \frac{g^-}{|\tau|}\leq \frac{9g^- }{4z|\tau|}-\frac{z}{2}g^-\leq -\frac{2g^-}{|\tau|^{1/2}},
\end{align}
and
\begin{equation}
\frac{4(2qg^-+(1 - z)^2 q_z^2)(2(1 - z)^2|\tau|-g^-)}{(1 -  z)^4(4q|\tau|+q_z^2)^2}g^-\leq \frac{10(\delta |\tau|^{1/2}+\delta^2)|\tau|}{|\tau|^2}g^- \leq \frac{g^- }{ |\tau|^{1/2}},
\end{equation}
which proves \ref{diff_i}.\\
On the other hand, at any local minimum of $g^+$ in $\{5|\tau|^{-1/2}\leq z\leq \delta\}$ for $\tau$ sufficiently negative the term in the second line of
\eqref{loc_min_ineq} has the good sign, and we can thus estimate
\begin{align}
   - g^+_\tau\leq  \frac{2g^+ }{z |\tau|}-\frac{z}{2(1+ z)}g^+ + \frac{g^+}{|\tau|}\leq -\frac{g^+}{|\tau|^{1/2}},
\end{align}
which proves \ref{diff_ii}.
\end{proof}

We can now prove Theorem \ref{thm:min.gradient.improved_intro} (gradient estimate), which we restate for convenience of the reader:

\begin{theorem}[gradient estimate]\label{thm:min.gradient.improved}
For ancient ovals in $\mathbb{R}^3$ we have
\begin{equation}
\lim_{L\to\infty}\limsup_{\tau\to -\infty}   \sup_{\bar{v}(z,\tau)\geq L|\tau|^{-1/2}} |g(z,\tau)-2|=0.
\end{equation}
\end{theorem}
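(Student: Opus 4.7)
My plan is to combine Proposition \ref{lem:gradient.conv.tale} with a parabolic maximum principle argument for $g^\pm = (1\pm z)g$ in the belt region. By the $\mathbb{Z}_2$-symmetry of the oval it suffices to work with $z>0$. Given $\eps>0$, Proposition \ref{lem:gradient.conv.tale} already yields $|g-2|<\eps/4$ on the parabolic tail $\{0\le z\le 5|\tau|^{-1/2}\}$ and on the intermediate/collar region $\{\bar v\in[L|\tau|^{-1/2},\sqrt{2}-\theta]\}$, provided $L$ is sufficiently large and $\tau$ sufficiently negative. Let $z_{\max}(\tau)$ be defined by $\bar v(z_{\max}(\tau),\tau)=\sqrt{2}-\theta$; by the intermediate asymptotics, $z_{\max}(\tau)\le\delta$ for $\tau\ll 0$ provided $\theta$ is chosen small enough. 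It remains to control $g$ on the belt region $B(\tau):=[5|\tau|^{-1/2},z_{\max}(\tau)]\subset[5|\tau|^{-1/2},\delta]$, where Lemma \ref{lem:min.gradient} is applicable. Moreover, on $\partial B(\tau)$, Proposition \ref{lem:gradient.conv.tale} yields the boundary estimates $g^-(z,\tau)\le 2+\eps/4$ and $g^+(z,\tau)\ge 2-\eps/4$ for $\tau$ sufficiently negative.

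For the upper bound on $g$, I would set $F(\tau):=\sup_{z\in B(\tau)} g^-(z,\tau)$ and argue by contradiction: suppose $F(\tau_0)>2+\eps/2$ for some $\tau_0\ll 0$. Whenever $F(\tau)>2+\eps/2$ the supremum must be attained at an interior point of $B(\tau)$ (since the boundary values lie below $2+\eps/4$), so Lemma \ref{lem:min.gradient}(i) combined with a standard envelope argument yields the Dini-type differential inequality $F'(\tau)\le -|\tau|^{-1/2}F(\tau)$ on the maximal interval $(\tau_1,\tau_0]$ on which $F>2+\eps/2$. If $\tau_1$ is finite, continuity gives $F(\tau_1)=2+\eps/2$, and integrating from $\tau_1$ to $\tau_0$ forces $F(\tau_0)\le F(\tau_1)$, contradicting $F(\tau_0)>2+\eps/2$. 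If instead $\tau_1=-\infty$, integrating the same inequality gives
\[
F(\tau)\ge\left(2+\tfrac{\eps}{2}\right)\exp\bigl(2(|\tau|^{1/2}-|\tau_0|^{1/2})\bigr)\qquad\text{for all }\tau\le\tau_0,
\]
which grows exponentially in $|\tau|^{1/2}$; but the a priori bound \eqref{growthbound_gpm} gives $F(\tau)\le\delta|\tau|^{1/2}$, again a contradiction once $|\tau|$ is large enough. The lower bound $g^+\ge 2-\eps/2$ on $B(\tau)$ follows by the symmetric argument applied to local minima of $g^+$ via Lemma \ref{lem:min.gradient}(ii).

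Combining the two bounds with $z\le\delta$ converts them into $|g-2|<\eps$ on $B(\tau)$, which together with Proposition \ref{lem:gradient.conv.tale} completes the proof. The main obstacle is that the evolution equation \eqref{eq:bar_v.evol_intro} for $\bar v$ degenerates as $\tau\to-\infty$, so standard parabolic estimates inevitably lose factors of $|\tau|$ and cannot deliver a sharp bound directly. The key observation that makes Lemma \ref{lem:min.gradient} decisive is the weight $|\tau|^{-1/2}$ in front of $g^\pm$ on the right-hand side of its differential inequality: this weight amplifies any interior extremum exponentially in $|\tau|^{1/2}$ in the backward direction, and thereby beats the only available crude a priori bound \eqref{growthbound_gpm}, which grows merely linearly in $|\tau|^{1/2}$.
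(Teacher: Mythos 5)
Your argument for the upper bound (controlling $\overline{g}(\tau):=\sup_{z} g^-(z,\tau)$ in the belt region) is essentially the paper's: the differential inequality from Lemma~\ref{lem:min.gradient}(i) forces exponential growth backward in $\tau$, which contradicts the linear a priori bound \eqref{growthbound_gpm}. However, your lower bound has a genuine gap. You claim the bound $g^+\geq 2-\eps/2$ "follows by the symmetric argument applied to local minima of $g^+$ via Lemma~\ref{lem:min.gradient}(ii)", and you identify \eqref{growthbound_gpm} as "the only available crude a priori bound" that is beaten. But \eqref{growthbound_gpm} is a one-sided bound: it says $|g^\pm|\leq\delta|\tau|^{1/2}$, which is an \emph{upper} bound. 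Running the argument for the infimum $\underline{g}(\tau)$, Lemma~\ref{lem:min.gradient}(ii) gives $\underline{g}_\tau\geq|\tau|^{-1/2}\underline{g}$, so going backward in time $\underline{g}$ decays \emph{exponentially}. Exponential decay does not contradict the upper bound $\underline{g}\leq\delta|\tau|^{1/2}$ — you would need a \emph{lower} bound on $g^+$ that decays slower than $e^{-c|\tau|^{1/2}}$, and you have not supplied one (the only a priori lower bound at hand from concavity is $g\geq 0$, which is not enough).

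The paper closes this gap with a different mechanism for the lower bound: from $\underline{g}(\tau)\leq Ce^{-|\tau|^{1/2}}$ one extracts a point $z_\ast\in[5|\tau|^{-1/2},\delta]$ where $g$, and hence $|q_z(z_\ast,\tau)|=z_\ast g(z_\ast,\tau)$, is exponentially small. Then the quadratic concavity of $q$ from Theorem~\ref{thm_ADS2}, together with $q_z(0,\tau)=0$, shows that $|q_z(\cdot,\tau)|$ is nondecreasing on $[0,z_\ast]$, so in particular $|q_z(|\tau|^{-1/2},\tau)|\leq Ce^{-|\tau|^{1/2}}$. This is then contradicted by the parabolic asymptotics \eqref{asympt_parab}, which force $|q_z(|\tau|^{-1/2},\tau)|\sim 2|\tau|^{-1/2}$ — a polynomial, not exponential, decay. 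Your proposal is missing both the use of concavity to transfer the smallness from $z_\ast$ down to the parabolic scale and the comparison with the parabolic expansion; without these the lower bound does not follow.
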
    

\begin{proof}By Proposition \ref{lem:gradient.conv.tale} (gradient in good regions) it is enough to show that for any $\delta>0$ the functions
\begin{equation}
\overline{g}(\tau):=\sup_{z \in [5|\tau|^{-1/2},\delta]}  g^-(z,\tau) \quad \mathrm{and}\quad \underline{g}(\tau):=\inf_{z \in [5|\tau|^{-1/2},\delta]}  g^+(z,\tau)
\end{equation}
satisfy
\begin{equation}\label{thm_grad_to_show}
    \limsup_{\tau \to -\infty}\overline{g}(\tau)\leq 2
\quad \mathrm{and} \quad
    \liminf_  {\tau \to -\infty}\underline{g}(\tau)\geq 2.
\end{equation}
Also note that, given any $\eps>0$, again by Proposition \ref{lem:gradient.conv.tale} (gradient in good regions) there exists some $\tau_{\delta,\varepsilon}>-\infty$ such that for $\tau\leq \tau_{\delta,\varepsilon}$ we have
\begin{equation}
 \sup_{z \in [0,5|\tau|^{-1/2})\cup (\delta,2\delta]} g^-(z,\tau)\leq 2+\varepsilon \quad \mathrm{and}\quad   \inf_{z \in [0,5|\tau|^{-1/2})\cup (\delta,2\delta]} g^+(z,\tau)\geq 2-\varepsilon.
\end{equation}

Now, suppose towards a contradiction that there is some $\tau_0\leq \tau_{\delta,\varepsilon}$ such that $\overline{g}(\tau_0)\geq 2+2\eps$. 
Then, applying Lemma \ref{lem:min.gradient} (differential inequalities for gradient functions) we infer that the function
\begin{equation}
    f(\tau):=\log \overline{g}(\tau)-|\tau|^{1/2}
\end{equation}
for all $\tau\leq\tau_0$ satisfies\footnote{In particular, for all $\tau\leq\tau_0$ the supremum of $g^{-}(\cdot,\tau)$ over $ [0,2\delta]$ is attained in $[5|\tau|^{-1/2},\delta]$.}
\begin{equation}
-f_{\tau}\geq 0.
\end{equation}
For all $\tau\leq\tau_0$ this implies
\begin{equation}
   \log\overline{g}(\tau) - |\tau|^{1/2}\geq f(\tau_0),
\end{equation}
contradicting the growth bound for $g^-$ from \eqref{growthbound_gpm}. Since $\eps>0$ was arbitrary, this proves the upper bound for $\overline{g}$.

Similarly, suppose towards a contraction that there is some $\tau_0\leq \tau_{\delta,\varepsilon}$ such that $\underline{g}(\tau_0)\leq 2-2\eps$.
Then, applying Lemma \ref{lem:min.gradient} (differential inequalities for gradient functions) for $\tau\leq\tau_0$ we infer that
\begin{equation}
   \log\underline{g}(\tau) + |\tau|^{1/2}\leq C,
\end{equation}
where $C=C(\tau_0)<\infty$. Together with the concavity of $q$ from Theorem \ref{thm_ADS2} (quadratic concavity), for $\tau\leq\tau_0$ this implies
\begin{equation}
|q_z(|\tau|^{-\frac{1}{2}},\tau)|\leq Ce^{-|\tau|^{\frac{1}{2}}},
\end{equation}
contradicting the asymptotics in the parabolic region from \eqref{asympt_parab}. Since $\eps>0$ was arbitrary, this proves the lower bound for $\underline{g}$, and thus concludes the proof of the theorem.
\end{proof}

Finally, let us describe the needed modifications for noncollapsed translators in $\mathbb{R}^4$:

\begin{proof}[{Proof of Theorem \ref{thm:min.gradient.improved_intro_trans} (gradient estimate for translators)}] Arguing as in the proof of Proposition \ref{lem:gradient.conv.tale} (gradient in good regions), but now using Theorem \ref{thm_transl_asy} (uniform asymptotics) and Theorem \ref{almost_quad_conc} (almost quadratic concavity) in lieu of Theorem \ref{thm_ADS1} (unique asymptotics) and Theorem \ref{thm_ADS2} (quadratic concavity), given any $\eps>0$, $\theta>0$ and $K<\infty$ we can find $\kappa>0$, $L<\infty$ and $\tau_\ast>-\infty$ with the following significance. If $M$ is $\kappa$-quadratic at some time $\tau_0\leq\tau_\ast$, then for all $\tau\leq\tau_0$ we have
\begin{equation}\label{grad_good_transl}
\sup_{\bar{v}(z,\tau)\in [L|\tau|^{-1/2},\sqrt{2}-\theta]} |g(z,\tau)-2|+ \sup_{ z\in  [0,K|\tau|^{-1/2}]} |g(z,\tau)-2|\leq \eps.
\end{equation}
Next, recall from \cite[Proposition 5.3]{CHH_translators} that the profile function of our translators evolves by
\begin{equation}
v_\tau= \frac{v_{yy}}{1+v_y^2}-\frac{y}{2} v_y+\frac{v}{2}-\frac{1}{\bar v}+e^\tau N[v], 
\end{equation}
where the nonlinear term $N[v]$ and its derivatives are well controlled thanks to the derivative estimates from \cite[Lemma 5.6]{CHH_translators}. Using this we infer that
\begin{multline}\label{eq:evol.pseudo_hessian_transl}
    g_\tau=\frac{4qg_{zz}}{4q|\tau|+q_z^2}+\frac{8qg_{z}}{z(4q|\tau|+q_z^2)}-\frac{z}{2}\left(1+\frac{1}{|\tau|}\right)g_z -\frac{g}{|\tau|}\\
    +\frac{4\left(2q(g+zg_z)+q_z^2\right)\left(2|\tau|-(g+zg_z)\right)}{(4q|\tau|+q_z^2)^2}g+E,
\end{multline}
where the only difference with Proposition \ref{prop:evol.pseudo_hessian} (evolution of gradient function) is the error term $E$, which, assuming $\kappa$-quadraticity at time $\tau_0\leq\tau_\ast$, in the belt region $\{ \sqrt{2}-\theta\leq \bar{v}\leq\sqrt{2}-K^2|\tau|^{-1}\}$ satisfies
\begin{equation}
|E|\leq e^{\frac{99}{100}\tau}\qquad \mathrm{for}\,\, \tau\leq\tau_0.
\end{equation}
Also observe that in the belt region, thanks to the asymptotics in the parabolic region and convexity, we have the coarse lower bound
\begin{equation}
g\geq |\tau|^{-1}\qquad \mathrm{for}\,\, \tau\leq\tau_0.
\end{equation}
Consequently, if $\mp g^\pm$ attains a local maximum in the belt region at some $(\bar{x}_0,\bar{\tau}_0)$, where $\bar{\tau}_0\leq\tau_0\leq\tau_\ast$, then assuming $\kappa$-quadraticity at time $\tau_0$ we still get the differential inequalities
\begin{equation}
\mp g^{\pm}_\tau(\bar{x}_0,\bar{\tau}_0) \leq -\frac{1}{2}|\bar{\tau}_0|^{-1/2}g^{\pm}(\bar{x}_0,\bar{\tau}_0).
\end{equation}
Utilizing these differential inequalities as above, the assertion of the theorem follows.
\end{proof}

\bigskip

\section{Hessian estimate in intermediate region}

In this section, we consider the function
\begin{equation}
h:=-q^2q_{zz}+qq_z^2.
\end{equation}

\begin{proposition}[{asymptotics of $h$}]\label{lem:rough.derivative.esti}
We have
\begin{equation}
    \lim_{\tau\to -\infty}\sup_{q(z,\tau)\geq 2}  |h(z,\tau)-8|=0\qquad\mathrm{and}\qquad
    \lim_{\tau\to -\infty}\sup_{ q(z,\tau)\leq |\tau|^{-2/3}}h(z,\tau)=0.
\end{equation}
\end{proposition}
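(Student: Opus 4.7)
The plan is to handle the two one-sided asymptotics separately, using different coordinates in each region.

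\emph{First claim.} I would first observe that $\{q(\cdot,\tau)\geq 2\}$ is contained in the parabolic region for $\tau$ sufficiently negative. The $C^0$ intermediate asymptotics in Theorem \ref{thm_ADS1}(ii) give $q(z,\tau)\to 2-z^2$ uniformly on $\delta\leq|z|\leq\sqrt{2}-\delta$, so $q<2$ eventually away from the origin, while squaring the parabolic asymptotics of Theorem \ref{thm_ADS1}(i) yields $q(z,\tau) = 2-z^2 + 2/|\tau|+o(|\tau|^{-1})$ for $|y|\leq K$, forcing $\{q\geq 2\} \subseteq \{|z|\leq C|\tau|^{-1/2}\}$. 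In this parabolic strip, the $C^k$-convergence combined with the chain rule identities $\bar v_z = |\tau|^{1/2} v_y$ and $\bar v_{zz} = |\tau| v_{yy}$ gives $\bar v\to\sqrt 2$, $\bar v_z\to -z/\sqrt 2$, and $\bar v_{zz}\to -1/\sqrt 2$, hence $q_z\to -2z$ and $q_{zz}\to z^2-2$. Substituting,
$$h = -q^2 q_{zz} + q q_z^2 \to -4(z^2-2)+8z^2 = 8+4z^2,$$
and since $z = O(|\tau|^{-1/2})\to 0$ in this region, we conclude $h\to 8$.

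\emph{Second claim.} The sub-level set $\{q\leq|\tau|^{-2/3}\}$ corresponds to $\bar v\leq|\tau|^{-1/3}$, i.e.\ $\rho:=|\tau|^{1/2}\bar v\leq|\tau|^{1/6}$, which lies in the tip/collar region. Differentiating the implicit relation $z = |\tau|^{-1/2}Y(0,\tau)+|\tau|^{-1}Z(\rho,\tau)$ yields $\bar v_z = |\tau|^{1/2}/Z_\rho$ and $\bar v_{zz} = -|\tau|^{3/2}Z_{\rho\rho}/Z_\rho^3$, so that $q_z = 2\rho/Z_\rho$ and $q_{zz} = 2|\tau|(Z_\rho - \rho Z_{\rho\rho})/Z_\rho^3$. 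A short algebraic manipulation then gives the key identity
\begin{equation*}
h = \frac{2\rho^4\,(Z_\rho + \rho Z_{\rho\rho})}{|\tau|\,Z_\rho^3},
\end{equation*}
in which the prefactor $1/|\tau|$ does most of the work. In the inner tip range $\rho\leq L$, Theorem \ref{thm_ADS1}(iii) provides $C^2$-convergence $Z(\cdot,\tau)\to Z^B(\cdot)$, and smoothness of the bowl at its tip forces $Z^B_\rho(\rho) = Z^B_{\rho\rho}(0)\rho + O(\rho^3)$; hence $Z^B_\rho + \rho Z^B_{\rho\rho} = 2Z^B_{\rho\rho}(0)\rho + O(\rho^3)$ while $(Z^B_\rho)^3$ is of order $\rho^3$, giving $h = O(\rho^2/|\tau|)\to 0$.

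The main obstacle is the outer range $L\leq\rho\leq|\tau|^{1/6}$, which is beyond the direct reach of Theorem \ref{thm_ADS1}(iii). The plan here is to combine Theorem \ref{thm:min.gradient.improved} (gradient estimate), which via $q_z = 2\rho/Z_\rho$ and $z\to\sqrt 2$ pins down $Z_\rho\sim -\rho/\sqrt 2$, with Theorem \ref{thm_ADS2} (quadratic concavity) and the bowl asymptotics $Z^B_\rho(\rho) = -\rho/\sqrt 2 + O(\rho^{-1})$ recalled in Proposition \ref{lem:gradient.conv.tale}, to extract also $Z_{\rho\rho}\sim -1/\sqrt 2$ in this outer range. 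Then $Z_\rho+\rho Z_{\rho\rho}\sim-\sqrt 2\,\rho$ and $Z_\rho^3\sim -\rho^3/(2\sqrt 2)$, so $h\sim 8\rho^2/|\tau|\leq 8|\tau|^{-2/3}\to 0$, which completes the proof.
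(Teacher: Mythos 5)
Your handling of the first claim, and of the inner tip range $\rho\le L$ in the second claim, is essentially the paper's argument in slightly different notation. For the first claim the paper also localizes $\{q\ge 2\}$ to $|y|\le 3$ and computes $h=2|\tau|v^4\left(-vv_{yy}+O(v_y^2)\right)\to 8$ from the $C^2$ parabolic asymptotics; your version is fine, although the chain of ``$\to$'' statements such as $\bar v_z\to -z/\sqrt2$ is loose (when $z=O(|\tau|^{-1/2})$ the error $o(|\tau|^{-1/2})$ from Theorem \ref{thm_ADS1}(i) is of the same order as $z$ itself), and the cleaner bookkeeping is to track the absolute errors and observe that $q_z^2$ is $O(|\tau|^{-1})$ while $q_{zz}=-2+o(1)$. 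For the inner tip range your identity $h=2\rho^4\left(Z_\rho+\rho Z_{\rho\rho}\right)/\left(|\tau| Z_\rho^3\right)$ is correct and gives $h=O(\rho^2/|\tau|)$ via the $C^2$ convergence $Z\to Z^B$ on $[0,L]$, exactly paralleling the paper.

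The gap is in the outer range $L\le\rho\le |\tau|^{1/6}$. You propose to ``extract $Z_{\rho\rho}\sim -1/\sqrt2$'' from the gradient estimate together with quadratic concavity, but those tools only give a one-sided bound on $Z_{\rho\rho}$. The gradient estimate pins down $Z_\rho=-\rho/\sqrt2\,(1+o(1))$, and quadratic concavity $q_{zz}\le 0$ translates (since $Z_\rho<0$) to $Z_\rho\ge \rho Z_{\rho\rho}$, i.e.\ $Z_{\rho\rho}\le Z_\rho/\rho\to -1/\sqrt2$. What your upper bound on $h$ actually needs is the opposite inequality, $Z_{\rho\rho}\ge -C$, equivalently a lower bound on $q_{zz}$; without it $-q^2q_{zz}$ could be large. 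The bowl asymptotics $Z^B_\rho=-\rho/\sqrt2+O(\rho^{-1})$ do not help here, since in this range Theorem \ref{thm_ADS1}(iii) gives no $C^2$ control of $Z$ by $Z^B$ (its conclusion is only on compact $\rho$-intervals). The paper closes exactly this gap by quoting the standard cylindrical estimates \cite[Proposition 2.15]{ADS2}, which give $q_z^2+q|q_{zz}|\le \eps|\tau|q$ on $\{\bar v\ge L|\tau|^{-1/2}\}$; then $h\le q\left(q_z^2+q|q_{zz}|\right)\le \eps|\tau|q^2\le \eps|\tau|^{-1/3}$ on $\{q\le|\tau|^{-2/3}\}$, which is the whole point. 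You should replace the heuristic ``extract $Z_{\rho\rho}$'' step with this input (or with an equivalent two-sided second-derivative bound coming from convexity/noncollapsing); as written the outer-range argument does not close.
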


\begin{proof}
For $|y|\leq 3$ thanks to the asymptotics in the parabolic region from \eqref{asympt_parab} we have
\begin{equation}
v_y(y,\tau)= -\frac{y}{\sqrt{2}|\tau|}+o(|\tau|^{-1})\qquad\textrm{and}\qquad v_{yy}(y,\tau)= -\frac{1}{\sqrt{2}|\tau|}+o(|\tau|^{-1}).
\end{equation}
Using this, for $|y|\leq 3$ we see that
\begin{equation}
h(|\tau|^{-1/2}y,\tau)=2|\tau| v(y,\tau)^4\left(-v(y,\tau) v_{yy}(y,\tau)+2v_y(y,\tau)^2\right)=8+o(1),
\end{equation}
which, observing also that $\bar{v}(|\tau|^{-1/2}y,\tau)< \sqrt{2}$ for $|y|\geq 3$, implies the first assertion.

Next, note that by standard cylindrical estimates \cite[Proposition 2.15]{ADS2}, given any $\eps>0$, for $L<\infty$ sufficiently large and $\tau$ sufficiently negative in the region $\bar{v}(z,\tau)\geq L|\tau|^{-1/2}$ we have
\begin{align}\label{std_cyl_est}
     q_z^2 +q|q_{zz}|\leq \eps |\tau|q.
\end{align}
On the other hand, observe that
\begin{equation}
q_z(z,\tau)=\frac{2\rho}{Z_\rho(\rho)} \qquad\textrm{and}\qquad  q_{zz}(z,\tau)=\frac{2(Z_\rho(\rho)-\rho Z_{\rho\rho}(\rho))}{Z_\rho(\rho)^3}|\tau| \qquad \mathrm{where}\quad \rho = |\tau|^{1/2}\bar{v}(z,\tau).
\end{equation}
Recall also that the profile function of the 2d-bowl with speed $1/\sqrt{2}$ for $\rho\to 0$ satisfies
\begin{equation}
Z^B_\rho(\rho)=-\frac{1}{2\sqrt{2}}\rho+O( \rho^{-1})\qquad\textrm{and}\qquad  Z^B_{\rho\rho}(\rho)=-\frac{1}{2\sqrt{2}}+O( \rho^{-2}).
\end{equation}
Together with the tip asymptotics from Theorem \ref{thm_ADS1} (unique asymptotics) for $\bar{v}(z,\tau)\leq L|\tau|^{-1/2}$ this yields
\begin{align}
     q_z^2 +q|q_{zz}|\leq C.
\end{align}
Combining these two estimates, the second assertion follows.
\end{proof}

 \begin{proposition}[{evolution of $h$}]\label{prop:w.evol}
The function $h:=-q^2q_{zz}+qq_z^2$ satisfies  
 \begin{multline}
h_\tau= \frac{4qh_{zz}-20 q_zh_z}{4q|\tau|+q_z^2}+\frac{20qq_z^2h-8h^2+6q^2q_z^4}{q^2(4q|\tau|+q_z^2)}-\frac{z}{2} \left(1+\frac{1}{|\tau|}\right)h_z+\left(2-\frac{4}{q}-\frac{1}{|\tau|}\right)h+2q_z^2\\
+\frac{4(2h-qq_z^2)}{q(4q|\tau|+q_z^2)^2}\left[3q_zh_z-3q_z^4+(h+qq_z^2)(2|\tau|+q_{zz})+\frac{4q^2q_z^2(2|\tau|+q_{zz})^2}{4q|\tau|+q_z^2}\right].
\end{multline}
\end{proposition}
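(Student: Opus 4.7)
The plan is to verify the formula by direct differentiation, since the content of the proposition is a purely algebraic identity. No maximum principle or auxiliary estimate is required beyond the evolution equations \eqref{eq:q.evol} for $q$ and \eqref{eq:q_z.evol} for $q_z$ already derived above.

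First I would differentiate \eqref{eq:q_z.evol} once more in $z$ to obtain the evolution equation for $q_{zz}$, which features $q_{zzz}$ and $q_{zzzz}$ together with derivatives of the denominator $D := 4q|\tau| + q_z^2$. I would then apply the product rule
\[
h_\tau = -2q\,q_\tau\, q_{zz} - q^2 q_{zz\tau} + q_\tau q_z^2 + 2q q_z q_{z\tau},
\]
substituting the known expressions for $q_\tau$, $q_{z\tau}$, and $q_{zz\tau}$ to obtain a rational expression in $q$ and its spatial derivatives up to order four.

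The key to collapsing the resulting expression into the stated compact form is to eliminate all occurrences of $q_{zzz}$ and $q_{zzzz}$ in favor of derivatives of $h$, using the elementary identities
\[
h_z = -q^2 q_{zzz} + q_z^3, \qquad h_{zz} = -q^2 q_{zzzz} - 2q q_z q_{zzz} + 3q_z^2 q_{zz},
\]
together with $q^2 q_{zz} = q q_z^2 - h$, which replaces $q_{zz}$ itself wherever convenient. After these substitutions the principal diffusion and transport contributions $\tfrac{4q h_{zz}}{D}$ and $-\tfrac{20 q_z h_z}{D}$ appear; the explicit drift and reaction terms of \eqref{eq:q.evol} produce the transport $-\tfrac{z}{2}(1+|\tau|^{-1}) h_z$ and the reaction $\left(2-\tfrac{4}{q}-\tfrac{1}{|\tau|}\right)h + 2q_z^2$; and the remaining terms group into the two rational expressions with denominators $q^2 D$ and $q D^2$ as claimed.

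The main obstacle is purely algebraic bookkeeping. Because $D_z = 4|\tau|q_z + 2q_z q_{zz}$ and the square $D^2$ arise from applying the quotient rule twice, many intermediate terms must be expanded and then recombined through the substitutions above, and only after grouping everything sharing the common factor $\tfrac{4(2h-qq_z^2)}{qD^2}$ does the bracketed expression on the second line of the claim appear. Working in the order of $h_{zz}$-substitution, then $h_z$-substitution, then collection of the $h$-free residue $2q_z^2$, keeps the computation tractable.
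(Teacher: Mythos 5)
Your proposal is correct and follows essentially the same route as the paper: differentiate the evolution equation for $q_z$ once more to obtain $q_{zz\tau}$, combine via the product rule (the paper phrases this as computing $(qq_z^2)_\tau$ and $(q^2q_{zz})_\tau$ separately and subtracting, which is the same computation), and then eliminate $q_{zzz}$, $q_{zzzz}$ in favor of $h_z$, $h_{zz}$ using the identities you record. The only difference is purely organizational; the substitution identities $h_z=-q^2q_{zzz}+q_z^3$ and $h_{zz}=-q^2q_{zzzz}-2qq_zq_{zzz}+3q_z^2q_{zz}$ you state are correct and are precisely what is used implicitly in the paper's final grouping.
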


\begin{proof}
Recall from the proof of Proposition \ref{prop:evol.pseudo_hessian} (evolution of gradient function) that
\begin{equation}\label{eq:q.evol_res}
q_\tau= \frac{4qq_{zz}-2q_z^2}{4q|\tau|+q_z^2} -\frac{z}{2} \left(1+\frac{1}{|\tau|}\right)q_z+ q-2,
\end{equation}
and
\begin{equation}\label{eq:q_z.evol_res}
q_{z\tau}= \frac{4qq_{zzz}}{4q|\tau|+q_z^2} -\frac{z}{2} \left(1+\frac{1}{|\tau|}\right)q_{zz}+\frac{1}{2} \left(1-\frac{1}{|\tau|}\right)q_z+\frac{4(q_z^2-2qq_{zz})(2|\tau|+q_{zz})}{(4q|\tau|+q_z^2)^2}q_z. 
\end{equation}
Differentiating again with respect to $z$ it follows that
\begin{multline}\label{eq:q_zz.evol}
q_{zz\tau}= \frac{4qq_{zzzz}-4q_zq_{zzz}}{4q|\tau|+q_z^2}-\frac{z}{2} \left(1+\frac{1}{|\tau|}\right)q_{zzz}-\frac{q_{zz}}{|\tau|}
+\frac{4(q_z^2-2qq_{zz})(2|\tau|+q_{zz})}{(4q|\tau|+q_z^2)^2} q_{zz} \\
+\frac{12q_zq_{zzz}(q_z^2-2qq_{zz}) }{(4q|\tau|+q_z^2)^2}-\frac{16q_z^2(q_z^2-2qq_{zz})(2|\tau|+q_{zz})^2}{(4q|\tau|+q_z^2)^3} .     
\end{multline}
Now, using these equations we infer that
\begin{multline}
(qq_z^2)_{\tau}= \frac{4q(qq_z^2)_{zz}-16qq_z^2q_{zz}-8q^2q_{zz}^2-2q_z^4}{4q|\tau|+q_z^2} -\frac{z}{2} \left(1+\frac{1}{|\tau|}\right)(qq_z^2)_z+\left(1-\frac{1}{|\tau|}\right)qq_z^2\\
+ (q-2)q_z^2 +\frac{8qq_z^2(q_z^2-2qq_{zz})(2|\tau|+q_{zz})}{(4q|\tau|+q_z^2)^2}  ,
\end{multline}
and
\begin{multline}
(q^2q_{zz})_\tau=\frac{4q(q^2q_{zz})_{zz}-12qq_z^2q_{zz}-20q^2q_zq_{zzz}}{4q|\tau|+q_z^2} -\frac{z}{2} \left(1+\frac{1}{|\tau|}\right)(q^2q_{zz})_z   -\frac{q^2q_{zz}}{|\tau|}+ 2q(q-2)q_{zz}\\
 +\frac{4(q_z^2-2qq_{zz})}{(4q|\tau|+q_z^2)^2}\left[3q^2q_zq_{zzz}+q^2q_{zz}(2|\tau|+q_{zz})-\frac{4q^2q_z^2(2|\tau|+q_{zz})^2}{4q|\tau|+q_z^2}\right].
\end{multline}
Subtracting these two equations the assertion follows.
\end{proof}

We now consider the function
\begin{equation}
h^\ast:=(2-|z|)h.
\end{equation}

\begin{lemma}[{differential inequality for $h^\ast$}]\label{lem:Hessian.max.speed}
If $\sup h^\ast(\cdot,\tau_0)\geq 1000$ at some $\tau_0\ll 0$, then $h^\ast(\cdot,\tau_0)$ attains its maximum at an interior point $z_0$ satisfying
$|\tau_0|^{-2/3} < q(z_0,\tau_0)< 2$ and we have
\begin{equation}
    h^\ast_\tau(z_0,\tau_0) \leq -\frac{h^\ast(z_0,\tau_0)^2}{150 |\tau_0|}.
\end{equation}
\end{lemma}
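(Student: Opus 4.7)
The argument splits into a localization step and a differential-inequality step. For the \textbf{localization}, Proposition \ref{lem:rough.derivative.esti} gives $\sup_{q\geq 2} h\to 8$ and $\sup_{q\leq |\tau|^{-2/3}} h\to 0$ as $\tau\to-\infty$; multiplying by $2-|z|\leq 2$, for $\tau_0 \ll 0$ we have $h^\ast<20$ on $\{q\geq 2\}$ and $h^\ast<1$ on $\{q\leq |\tau_0|^{-2/3}\}$. Since $h^\ast$ also vanishes at the tip boundary $\{q=0\}$, the hypothesis $\sup h^\ast(\cdot,\tau_0)\geq 1000$ forces the maximum to be attained at an interior point $z_0$ with $|\tau_0|^{-2/3}<q(z_0,\tau_0)<2$; by the $\mathbb{Z}_2$-symmetry of $h^\ast$ I may assume $z_0>0$.

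For the \textbf{differential inequality}, the maximum conditions $h^\ast_z=0$ and $h^\ast_{zz}\leq 0$ give $h_z(z_0,\tau_0)=h(z_0,\tau_0)/(2-z_0)$ and $h_{zz}(z_0,\tau_0)\leq 2h(z_0,\tau_0)/(2-z_0)^2$, while the hypothesis yields $H:=h(z_0,\tau_0)\geq 500$. I plan to substitute these into the evolution formula of Proposition \ref{prop:w.evol}. Writing also $Q:=q(z_0,\tau_0)$ and $A:=4Q|\tau_0|+q_z^2(z_0,\tau_0)$, the standard cylindrical estimate (applicable here because $\{q>|\tau_0|^{-2/3}\}$ lies well inside the cylindrical region for $\tau_0\ll 0$) gives $A=(4+o(1))Q|\tau_0|$ and $2|\tau_0|+q_{zz}(z_0,\tau_0)=(2+o(1))|\tau_0|$. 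Together with the intermediate-region control $|q_z|\leq 2|z_0|+o(1)$ this yields $qq_z^2\ll H$ at the max, so $2h-qq_z^2\sim 2H$ and $h+qq_z^2\sim H$. Consequently the term $-8h^2/(q^2A)$ is $\sim -2H^2/(Q^3|\tau_0|)$, and the complicated last line of Proposition \ref{prop:w.evol}---whose bracket is dominated by $(h+qq_z^2)(2|\tau|+q_{zz})\sim 2H|\tau_0|$---evaluates to $\sim H^2/(Q^3|\tau_0|)$. The net leading contribution is thus $\sim -H^2/(Q^3|\tau_0|)$, and since $Q<2$ this is at most $-H^2/(8|\tau_0|)$, comfortably beating the target $(2-z_0)H^2/(150|\tau_0|)$ by roughly a factor of nine.

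The remaining terms are then absorbed. The two helper terms $-(z/2)(1+1/|\tau|)h_z$ and $(2-4/q-1/|\tau|)h$ are both negative at $(z_0,\tau_0)$ (using $z_0>0$, $h_z>0$, and $Q<2$), so they only contribute favorably; the five remaining positive terms (those involving $h_{zz}$, $q_zh_z$, $q_z^2h$, $q_z^4$, and $q_z^2$) are each controlled by $H\geq 500$ and the intermediate-region bound $|q_z|=O(|z_0|)$, making them smaller than the main $H^2/(Q^3|\tau_0|)$ scaling. The \textbf{main technical obstacle} is the near-cancellation between the dominant term $-8h^2/(q^2A)$ and the complicated last term of Proposition \ref{prop:w.evol}: both are of the same order $H^2/(Q^3|\tau_0|)$, but the second carries a coefficient approximately one half of the first, leaving a decisive net negativity that must be confirmed. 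Making this factor precise requires careful exploitation of the inequalities $qq_z^2\ll H$ and $|q_{zz}|\ll |\tau|$, which together with $H\geq 500$ follow from the cylindrical and intermediate-region asymptotics; multiplying the resulting bound by $(2-z_0)$ then yields the claim.
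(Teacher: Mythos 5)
Your proposal follows the same outline as the paper's proof: localize the maximum via Proposition \ref{lem:rough.derivative.esti}, use the second-derivative test together with the gradient estimate and cylindrical estimates, and observe the partial cancellation between the $-8h^2/(q^2A)$ term (where $A:=4q|\tau|+q_z^2$) and the dominant piece of the last line of Proposition \ref{prop:w.evol}, yielding a net contribution of roughly $-h^2/(q^3|\tau|)$. That much is correct and matches the paper's strategy.

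However, your treatment of the remaining positive terms contains a genuine gap. You discard the helper term $-\frac{z}{2}\left(1+\frac{1}{|\tau|}\right)h_z$ as ``contributing favorably'' and then assert that $2q_z^2$ is ``smaller than the main $H^2/(Q^3|\tau_0|)$ scaling.'' This is false in the regime that matters: $2q_z^2$ is a $\tau$-independent quantity of size comparable to $8z_0^2$ (which is order one whenever $z_0$ is of order one), whereas $H^2/(Q^3|\tau_0|)$ can be as small as $\sim 250000/(8|\tau_0|)$ when $h=H$ sits near its minimal value $\sim 500$. For $|\tau_0|$ sufficiently negative the bounded term $2q_z^2$ therefore exceeds the net main term, and the claimed inequality fails if one only retains the main term. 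The paper handles this by \emph{not} discarding $-\frac{z}{2}h_z$: it writes $2q_z^2 \leq 10z^2 = -\frac{z}{2}\cdot(-20z)$ and verifies that the combined bracket $-\frac{z}{2}\big[(1+\frac{1}{|\tau|})h_z - 20z\big]$ is nonpositive, using $h_z = h^\ast/(2-z)^2 \geq 250$ together with $20z \leq 20\sqrt{2}$. That is, the $-\frac{z}{2}h_z$ term is not merely of favorable sign --- it is essential to absorb $2q_z^2$, and your argument must invoke it explicitly rather than discard it.

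A second, smaller point: the derivation of the constant $150$ in the conclusion requires explicit control, not just $o(1)$ and $\sim$ bookkeeping. The paper achieves this by first rewriting the inequality as \eqref{eq:w.evol} with explicit coefficients (e.g.\ the $31qq_z^2h$ coefficient comes from combining $20qq_z^2h$, the $6q^2q_z^4 \leq \frac{3}{50}hqq_z^2$ bound, and the $\left(2+2(2+\eps)^2\right)qq_z^2h$ contribution), and then proving the two explicit bounds $8q^3h^\ast - 20q^2q_z(2-z)h^\ast + 31qq_z^2(2-z)^2h^\ast - 4(2-z)h^{\ast 2} \leq -h^{\ast 2}$ and $(2-z)^2q^2(4q|\tau|+q_z^2) \leq 150|\tau|$ using $h^\ast\geq 1000$, $qq_z^2\leq 5$, and $2-z \geq 2-\sqrt{2}-o(1)$. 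Your asymptotic bookkeeping gives the right leading order but would need to be tightened to obtain the stated constant.
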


\begin{proof}
If $\sup h^\ast(\cdot,\tau_0)\geq 1000$ at some $\tau_0\ll 0$, then by Proposition \ref{lem:rough.derivative.esti} (asymptotics of $h$) the function $h^\ast(\cdot,\tau_0)$ attains its maximum at a point $z_0>0$, such that
\begin{equation}\label{comp_at_max}
h^\ast\geq 1000\qquad\textrm{and}\qquad
|\tau|^{-2/3} < q< 2.
\end{equation}
holds at $(z_0,\tau_0)$. Computing from now on at this point, observe that by Theorem \ref{thm:min.gradient.improved} (sharp gradient estimate) and the second derivative estimate from \eqref{std_cyl_est} we have
\begin{equation}\label{eq:qq_z^2.assumption}
q_z^2\leq 5z^2,\qquad qq_z^2 \leq 5\qquad\mathrm{and}\qquad 
-q_{zz}\leq \eps |\tau|.
\end{equation}
Using the latter two inequalities and the assumption that $h^\ast\geq 1000$ we can estimate
\begin{equation}\label{2nd_useful_est}
6q^2q_z^4 \leq \frac{3}{50}h qq_z^2\qquad\mathrm{and}\qquad 
\frac{4q^2q_z^2(2|\tau|+q_{zz})^2}{4q|\tau|+q_z^2}\leq (2+\eps)^2qq_z^2|\tau|.
\end{equation}
By Proposition \ref{prop:w.evol} (evolution of $h$), observing also that some terms have the good sign, specifically
\begin{equation}
\left(2-\frac{4}{q}-\frac{1}{|\tau|}\right)h\leq 0\qquad\mathrm{and}\qquad  \frac{4(2h-qq_z^2)}{q(4q|\tau|+q_z^2)^2}\left[-3q_z^4+(h+qq_z^2)q_{zz}\right]\leq 0,
\end{equation}
we have the evolution inequality
 \begin{multline}
h_\tau\leq \frac{4qh_{zz}-20 q_zh_z}{4q|\tau|+q_z^2}+\frac{20qq_z^2h-8h^2+6q^2q_z^4}{q^2(4q|\tau|+q_z^2)}-\frac{z}{2}\left[ \left(1+\frac{1}{|\tau|}\right)h_z-\frac{4}{z}q_z^2\right]\\
+\frac{12(2h-qq_z^2)q_zh_z}{q(4q|\tau|+q_z^2)^2}+\frac{4(2h-qq_z^2)}{q(4q|\tau|+q_z^2)^2}\left[2|\tau|(h+qq_z^2)+\frac{4q^2q_z^2(2|\tau|+q_{zz})^2}{4q|\tau|+q_z^2}\right].
\end{multline}
Using \eqref{2nd_useful_est} we can estimate the last term by
\begin{multline}
\frac{4(2h-qq_z^2)}{q(4q|\tau|+q_z^2)^2}\left[2|\tau|(h+qq_z^2)+\frac{4q^2q_z^2(2|\tau|+q_{zz})^2}{4q|\tau|+q_z^2}\right]\\
\leq \frac{4h^2}{q^2(4q|\tau|+q_z^2)}+ \frac{\left(2+2(2+\eps)^2\right)qq_z^2h}{q^2(4q|\tau|+q_z^2)}.
\end{multline}
Using \eqref{eq:qq_z^2.assumption} and again \eqref{2nd_useful_est}  we thus infer that
\begin{equation}\label{eq:w.evol}
h_\tau\leq  \frac{4qh_{zz}-20 q_zh_z }{4q|\tau|+q_z^2} +\frac{12(2h-qq_z^2)q_zh_z}{q(4q|\tau|+q_z^2)^2}+ \frac{31qq_z^2 h-4h^2}{q^2(4q|\tau|+q_z^2)}-\frac{z}{2}\left[ \left(1+\frac{1}{|\tau|}\right)h_z
-20z\right].
\end{equation}
Now, since we are at a maximum of $h^\ast$, we have
\begin{align}\label{eq:phi.max.ineq}
h_z=\frac{h^\ast}{(2-z)^2}  \qquad\mathrm{and}\qquad   h_{zz}\leq \frac{2h^\ast}{(2-z)^3}.
\end{align}
Multiplying  \eqref{eq:w.evol} by $2-z$, and considering also the signs implied by \eqref{comp_at_max} and \eqref{eq:qq_z^2.assumption} and the concavity of $q$, this yields
 \begin{equation}
 h^\ast_\tau\leq  \frac{8q^3h^\ast-20 q^2q_z(2-z)h^\ast+31qq_z^2 (2-z)^2 h^\ast-4(2-z)h^{\ast 2} }{(2-z)^2q^2(4q|\tau|+q_z^2)}.
\end{equation}
Finally, using again \eqref{comp_at_max} and \eqref{eq:qq_z^2.assumption} and the fact that $2-z \geq 2-\sqrt{2}-o(1)$ we see that
\begin{equation}
8q^3h^\ast-20 q^2q_z(2-z)h^\ast+31qq_z^2 (2-z)^2 h^\ast-4(2-z)h^{\ast 2}\leq - h^{\ast 2}
\end{equation}
and
\begin{equation}
(2-z)^2q^2(4q|\tau|+q_z^2)\leq 150 |\tau|.
\end{equation}
This implies the assertion.
\end{proof}

\begin{theorem}[Hessian estimate in intermediate region]\label{thm:Hessian.cyl.esti}
For all $\tau\ll 0$ we have
\begin{equation}
 \bar v_{zz}\geq -\frac{1000}{\bar{v}^5}.
\end{equation}
\end{theorem}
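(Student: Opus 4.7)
The plan is to translate the Hessian bound into a uniform upper bound on $h^\ast=(2-|z|)h$, and then drive that bound by a backward-in-time ODE comparison using Lemma \ref{lem:Hessian.max.speed}. First, writing $q=\bar v^2$ gives $q_z=2\bar v\bar v_z$ and $q_{zz}=2\bar v_z^2+2\bar v\bar v_{zz}$, so
\begin{equation*}
h=-q^2q_{zz}+qq_z^2=2\bar v^4\bar v_z^2-2\bar v^5\bar v_{zz},\qquad\text{hence}\qquad \bar v_{zz}=\frac{\bar v_z^2}{\bar v}-\frac{h}{2\bar v^5}.
\end{equation*}
Since $\bar v_z^2/\bar v\geq 0$, any bound of the form $h\leq 2000$ forces $\bar v_{zz}\geq -1000/\bar v^5$. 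Moreover, the diameter asymptotics that follow from Theorem \ref{thm_ADS1} give $z_{\max}(\tau)\to\sqrt 2<3/2$, so $2-|z|>1/2$ on the entire support of $\bar v(\cdot,\tau)$ once $\tau$ is sufficiently negative. Hence it suffices to show $h^\ast\leq 1000$ for all $\tau\ll 0$.

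Next, I would argue by contradiction using the envelope $u(\tau):=\sup_{z}h^\ast(z,\tau)$. Lemma \ref{lem:Hessian.max.speed} already asserts that whenever $u(\tau_0)\geq 1000$, the supremum is attained at an interior point of the intermediate region $\{|\tau_0|^{-2/3}<q<2\}$ (the boundary behavior $h^\ast\to 0$ being handled by Proposition \ref{lem:rough.derivative.esti}), together with the pointwise inequality
\begin{equation*}
h^\ast_\tau(z_0,\tau_0)\leq -\frac{u(\tau_0)^2}{150\,|\tau_0|}.
\end{equation*}
A standard Hamilton-type regularization (for instance, adding a small rapidly decaying perturbation in $\tau$ to force uniqueness and smoothness of the maximizer, then letting the perturbation vanish) upgrades this pointwise inequality to the envelope ODE $u'(\tau)\leq -u(\tau)^2/(150\,|\tau|)$ whenever $u(\tau)\geq 1000$.

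Finally, I would integrate this inequality backward in time. Setting $\phi=1/u$, it reads $\phi'(\tau)\geq 1/(150\,|\tau|)$ on $\{u\geq 1000\}$, and in particular forces $u$ to be non-increasing on that set. So if, for contradiction, $u(\tau_1)>1000$ for some $\tau_1\ll 0$, then by continuity $u(\tau)\geq u(\tau_1)>1000$ for all $\tau\leq\tau_1$, and integrating from $\tau$ up to $\tau_1$ yields
\begin{equation*}
\phi(\tau_1)\geq \phi(\tau)+\frac{1}{150}\ln\frac{|\tau|}{|\tau_1|}\longrightarrow+\infty\qquad\text{as}\qquad \tau\to-\infty,
\end{equation*}
contradicting the finiteness of $\phi(\tau_1)$. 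This establishes $h^\ast\leq 1000$, and by the first paragraph completes the proof.

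The step I expect to require the most care is the rigorous passage from the pointwise inequality at the interior maximizer in Lemma \ref{lem:Hessian.max.speed} to the envelope ODE for $u(\tau)$. This is a standard but nontrivial maneuver; either a smooth barrier perturbation or a viscosity-sense reading should do, and the fact that the maximizer stays uniformly in the interior of $\{|\tau|^{-2/3}<q<2\}$ (again via Proposition \ref{lem:rough.derivative.esti}) means no separate boundary or degeneracy analysis is needed.
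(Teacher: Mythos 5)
Your proof is correct and follows essentially the same route as the paper: reduce the Hessian bound to $h^\ast\leq 1000$ via the algebraic identity $h=2\bar v^4\bar v_z^2-2\bar v^5\bar v_{zz}$ and the fact that $2-|z|\geq 1/2$, then run an ODE comparison for $\sup_z h^\ast$ using Lemma~\ref{lem:Hessian.max.speed}. Your substitution $\phi=1/u$ with $\phi'\geq 1/(150|\tau|)$ is just a reparametrization of the paper's choice $f(\tau)=150\,\overline{h}(\tau)^{-1}+\log|\tau|$, and your extra remarks — that the interior location of the maximizer is supplied by Proposition~\ref{lem:rough.derivative.esti}, and that passing from the pointwise inequality at the maximizer to the envelope ODE requires a Hamilton-trick or viscosity-sense reading — are valid and make explicit what the paper leaves implicit.
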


\begin{proof}
Setting $\overline{h}(\tau):=\sup_{z} h^\ast(z,\tau)$, we consider the function
\begin{equation}
f(\tau):=150 \overline{h}(\tau)^{-1}+\log|\tau|.
\end{equation}
If we had $\overline{h}(\tau_0)\geq 1000$ at some $\tau_0\ll 0$, then by Lemma \ref{lem:Hessian.max.speed} (differential inequality for $h^\ast$) for all $\tau\leq \tau_0$ we would obtain $-f_{\tau} \leq 0$, contradicting the fact that $\lim_{\tau\to -\infty} f(\tau)=\infty$. We have thus shown that that there exists a $\tau_\ast>-\infty$ such that
\begin{equation}
\sup_{\tau\leq\tau_\ast}\overline{h}(\tau)\leq 1000.
\end{equation}
Remembering the fact that $2-|z| \geq 1/2$, we thus conclude that
\begin{equation}
    2\bar v^5\bar v_{zz} \geq 2\bar v^5\bar v_{zz}-2\bar v^4\bar v_z^2 =q^2q_{zz}-qq_z^2\geq  -2000
\end{equation}
for all $\tau\leq\tau_\ast$. This proves the theorem.
\end{proof}

Finally, let us establish the corresponding variant for noncollapsed translators in $\mathbb{R}^4$:

\begin{theorem}[Hessian estimate in intermediate region for translators]\label{thm:Hessian.cyl.esti_trans}
There exist $\kappa>0$ and $\tau_\ast>-\infty$ with the following significance. If $M$ is $\kappa$-quadratic at some time $\tau_0\leq\tau_\ast$, then for $\tau\leq\tau_0$ we have
\begin{equation}
 \bar v_{zz}\geq -\frac{1000}{\bar{v}^5}.
\end{equation}
\end{theorem}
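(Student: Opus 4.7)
The plan is to follow the strategy of Theorem \ref{thm:Hessian.cyl.esti} closely, making the same adjustments as were used in passing from Theorem \ref{thm:min.gradient.improved} to Theorem \ref{thm:min.gradient.improved_intro_trans}. First, I would establish the translator analog of Proposition \ref{lem:rough.derivative.esti} (asymptotics of $h$), replacing the uses of Theorem \ref{thm_ADS1} and Theorem \ref{thm_ADS2} by Theorem \ref{thm_transl_asy} (uniform asymptotics) and Theorem \ref{almost_quad_conc} (almost quadratic concavity). Since these provide uniform control for $\kappa$-quadratic translators at times $\tau_0\leq\tau_\ast$, the same computation gives, for $\kappa$ sufficiently small and $\tau_\ast$ sufficiently negative, uniform bounds $\sup_{q(\cdot,\tau)\geq 2}|h(\cdot,\tau)-8|\leq 1$ and $\sup_{q(\cdot,\tau)\leq |\tau|^{-2/3}} h(\cdot,\tau) \leq 1$ for all $\tau\leq\tau_0$.

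Next, I would derive the evolution equation for $h=-q^2 q_{zz} + q q_z^2$ in the translator setting. Starting from the translator profile evolution in [CHH\_translators, Proposition 5.3], which differs from \eqref{eq:bar_v.evol} only by the nonlinear term $e^\tau N[v]$, the same chain of calculations that led to Proposition \ref{prop:w.evol} produces the same evolution equation for $h$ plus an additional error term $E_h$ arising from $e^\tau N[v]$ and its $z$-derivatives up to order three. Invoking the derivative estimates from [CHH\_translators, Lemma 5.6] and arguing exactly as in the proof of Theorem \ref{thm:min.gradient.improved_intro_trans}, one obtains in the intermediate region $\{|\tau|^{-2/3}<q<2\}$ the bound $|E_h|\leq e^{\tfrac{99}{100}\tau}$ for all $\tau\leq \tau_0$, provided $M$ is $\kappa$-quadratic at $\tau_0\leq\tau_\ast$.

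Then I would establish the translator version of Lemma \ref{lem:Hessian.max.speed}. The same arguments as before apply, now using Theorem \ref{thm:min.gradient.improved_intro_trans} (gradient estimate for translators) in lieu of Theorem \ref{thm:min.gradient.improved} and the cylindrical second derivative estimates which remain valid under $\kappa$-quadraticity. At any interior maximum $(z_0,\tau_0)$ of $h^\ast=(2-|z|)h$ with $h^\ast(z_0,\tau_0)\geq 1000$, the same computation yields
\begin{equation}
h^\ast_\tau(z_0,\tau_0)\leq -\frac{h^\ast(z_0,\tau_0)^2}{150|\tau_0|}+(2-|z_0|)|E_h|.
\end{equation}
Since $(2-|z_0|)|E_h|\leq 2e^{\tfrac{99}{100}\tau_0}$ is super-polynomially small while $h^\ast(z_0,\tau_0)^2/(150|\tau_0|)\geq 1000^2/(150|\tau_0|)$ is only polynomially small, for $\tau_\ast$ sufficiently negative the error is absorbed, giving $h^\ast_\tau(z_0,\tau_0)\leq -h^\ast(z_0,\tau_0)^2/(300|\tau_0|)$.

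Finally, the maximum principle ODE argument from the proof of Theorem \ref{thm:Hessian.cyl.esti} applies verbatim: considering $f(\tau)=300\overline{h}(\tau)^{-1}+\log|\tau|$ where $\overline{h}(\tau)=\sup_z h^\ast(z,\tau)$, the assumption $\overline{h}(\tau_0)\geq 1000$ forces $-f_\tau\leq 0$ for all $\tau\leq\tau_0$, contradicting $\lim_{\tau\to-\infty}f(\tau)=\infty$. Hence $\sup_{\tau\leq\tau_0}\overline{h}(\tau)\leq 1000$, which yields $\bar v_{zz}\geq -1000/\bar v^5$ as in the oval case. The main obstacle is to verify the uniform error estimate $|E_h|\leq e^{\tfrac{99}{100}\tau}$, which requires that the bounds on $N[v]$ and its derivatives from [CHH\_translators, Lemma 5.6] hold uniformly across the family of $\kappa$-quadratic translators; this was essentially already carried out in the proof of the gradient estimate for translators and transfers with only notational changes.
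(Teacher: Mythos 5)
Your proposal matches the paper's proof of Theorem~\ref{thm:Hessian.cyl.esti_trans} step for step: establish the translator analog of Proposition~\ref{lem:rough.derivative.esti} via the uniform asymptotics and almost quadratic concavity, compute the evolution of $h$ from \cite[Proposition 5.3]{CHH_translators} with an error $E$ bounded by $e^{\frac{99}{100}\tau}$ in the intermediate region, absorb this error into the differential inequality of Lemma~\ref{lem:Hessian.max.speed} at any interior maximum of $h^\ast$, and run the same maximum-principle ODE argument. The only cosmetic difference is the constant ($300$ versus the paper's $160$) used to absorb the error, which is immaterial.
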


\begin{proof}
Arguing as in the proof of Proposition \ref{lem:rough.derivative.esti} (asymptotics of $h$), but now using the cylindrical estimates from \cite[Lemma 5.6]{CHH_translators} in lieu of the ones from \cite[Proposition 2.15]{ADS2}, given any $\eps>0$ we can find $\kappa>0$ and $\tau_\ast>-\infty$ with the following significance. If $M$ is $\kappa$-quadratic at some time $\tau_0\leq\tau_\ast$, then for all $\tau\leq\tau_0$ we have
\begin{equation}
    \sup_{q(z,\tau)\geq 2}  |h(z,\tau)-8|+\sup_{ q(z,\tau)\leq |\tau|^{-2/3}}h(z,\tau)\leq \eps.
\end{equation}
Next, using again the evolution equation from \cite[Proposition 5.3]{CHH_translators} we infer that 
 \begin{multline}
h_\tau= \frac{4qh_{zz}-20 q_zh_z}{4q|\tau|+q_z^2}+\frac{20qq_z^2h-8h^2+6q^2q_z^4}{q^2(4q|\tau|+q_z^2)}-\frac{z}{2} \left(1+\frac{1}{|\tau|}\right)h_z+\left(2-\frac{4}{q}-\frac{1}{|\tau|}\right)h+2q_z^2\\
+\frac{4(2h-qq_z^2)}{q(4q|\tau|+q_z^2)^2}\left[3q_zh_z-3q_z^4+(h+qq_z^2)(2|\tau|+q_{zz})+\frac{4q^2q_z^2(2|\tau|+q_{zz})^2}{4q|\tau|+q_z^2}\right]+E,
\end{multline}
where the only difference with Proposition \ref{prop:w.evol} (evolution of $h$) is the error term $E$, which, assuming $\kappa$-quadraticity at time $\tau_0\leq\tau_\ast$, in the region $\{ |\tau|^{-2/3}< q<2\}$ satisfies
\begin{equation}
|E|\leq e^{\frac{99}{100}\tau}\qquad \mathrm{for}\,\, \tau\leq\tau_0.
\end{equation}
Hence, if $\sup h^\ast(\cdot,\bar{\tau}_0)\geq 1000$ at some $\bar{\tau}_0\leq\tau_0$, then at any interior maximum $\bar{z}_0$ we get
\begin{equation}
    h^\ast_\tau(\bar{z}_0,\bar{\tau}_0) \leq -\frac{h^\ast(\bar{z}_0,\bar{\tau}_0)^2}{160 |\bar{\tau}_0|}.
\end{equation}
Utilizing this differential inequality as above, the assertion of the theorem follows.
\end{proof}

 \bigskip
 
 \section{Hessian estimate in tip region}
 
In this section, working in the unrescaled variables for convenience, we consider the function
\begin{equation}
H=Q_{xx},\qquad\mathrm{where}\,\, Q=V^p.
\end{equation}

\begin{proposition}[$H$ away from collar region]\label{lem:Hessian.soliton}
There exist $p<\infty$ such that for all $t\ll 0$ we have
\begin{equation}
 H(x,t)>0 \quad\mathrm
{if}\,\,\, V(x,t)=\theta\sqrt{|t|} \,\,\,\mathrm{or}\,\, \, V(x,t)\leq L\sqrt{|t|/\log|t|}.
\end{equation}
\end{proposition}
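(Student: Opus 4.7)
Since $H = Q_{xx} = pV^{p-2}\bigl((p-1)V_x^2 + VV_{xx}\bigr)$ and $V>0$ on both boundary sets, the plan is to show that the bracket $\Psi := (p-1)V_x^2 + VV_{xx}$ is strictly positive there for some single $p<\infty$. Crucially, $\Psi$ is invariant under the parabolic rescaling $(x,V)\mapsto(\lambda x,\lambda V)$, which will let us compare with the 2d-bowl in the tip.

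We first treat the collar-side boundary $V=\theta\sqrt{|t|}$. Passing to the rescaled variables, this locus becomes $\{\bar{v}=\theta\}$, where $V_x^2 = \bar{v}_z^2/|\tau|$ and $VV_{xx} = \bar{v}\bar{v}_{zz}/|\tau|$. The intermediate-region asymptotics from Theorem~\ref{thm_ADS1}(ii) force $z\to\sqrt{2-\theta^2}$, and the gradient estimate Theorem~\ref{thm:min.gradient.improved}, applied via $g = -2\bar{v}\bar{v}_z/z \to 2$, pins down $\bar{v}_z^2 \to (2-\theta^2)/\theta^2$. Combined with $\bar{v}_{zz} \geq -1000/\bar{v}^5$ from Theorem~\ref{thm:Hessian.cyl.esti}, this yields
\[
\Psi \,\geq\, \frac{1}{|\tau|}\!\left(\frac{(p-1)(2-\theta^2)}{\theta^2} - \frac{1000}{\theta^4}\right) + o(|\tau|^{-1}),
\]
which is positive whenever $(p-1)\theta^2(2-\theta^2) > 1000$.

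For the deep-tip boundary $V \leq L\sqrt{|t|/\log|t|}$, we rescale by $(|\tau|/|t|)^{1/2}$, so that in tip-scaled coordinates the region becomes $\{\tilde{V} \leq L\}$. By the tip asymptotics Theorem~\ref{thm_ADS1}(iii) and scale-invariance of $\Psi$, it is enough to prove positivity of $\Psi^B := (p-1)(V^B_x)^2 + V^B V^B_{xx}$ on $\{0 < V^B \leq L\}$ for the 2d-bowl of speed $1/\sqrt{2}$. The bowl translator equation reads $V^B V^B_{xx} = (1+(V^B_x)^2)\bigl(1+V^B V^B_x/\sqrt{2}\bigr)$, and the substitution $s := -V^B V^B_x$ produces after direct algebra
\[
\Psi^B = \frac{s/\sqrt{2}-1}{(V^B)^2}\bigl(R(s;p) - (V^B)^2\bigr), \qquad R(s;p) := \frac{s^2(p - s/\sqrt{2})}{s/\sqrt{2}-1}.
\]
On the bowl $s$ ranges in $(\sqrt{2},2\sqrt{2}]$, with $s=2\sqrt{2}$ at the tip and $s\to\sqrt{2}$ at infinity, and one checks that on this interval $R(\,\cdot\,;p)$ achieves its minimum $8(p-2)$ at $s=2\sqrt{2}$ (for $p \geq 3$). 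Choosing $p > \max\{1 + 1000/[\theta^2(2-\theta^2)],\, 2+L^2/8\}$ thus gives $R(s;p) > L^2 \geq (V^B)^2$, so $\Psi^B > 0$. The $C^2$-convergence then transfers this positivity to the flow for $\tau \ll 0$; to handle the singular tip $\rho\to 0$ cleanly, we will rewrite $\Psi$ in the $Z$-profile as $[(p-1)Z_\rho - \rho Z_{\rho\rho}]/Z_\rho^3$ and invoke $C^2$-convergence of $Z$ to $Z^B$ there.

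The main obstacle is the bowl analysis: uniformly establishing $\Psi^B > 0$ over all of $(0,L]$ rather than just at the asymptotic endpoints. This rests on the ODE identity above and the monotonicity of $R(s;p)$ in $s$, which together allow a single choice of $p$ to meet both the collar bound and the tip bound.
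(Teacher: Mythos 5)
Your proof follows the same two-region strategy as the paper: you pass to the scale-invariant quantity $p^{-1}Q^{-1+2/p}H = VV_{xx}+(p-1)V_x^2$, bound it on the collar-side boundary $V=\theta\sqrt{|t|}$ by combining Theorem~\ref{thm:min.gradient.improved} with Theorem~\ref{thm:Hessian.cyl.esti}, and handle the deep tip $V\leq L\sqrt{|t|/\log|t|}$ by rewriting the quantity in terms of $Z$ and invoking the $C^k$-convergence to $Z^B$ from Theorem~\ref{thm_ADS1}(iii). The one place you go beyond the paper is the tip: the paper states the identity $\bar v\bar v_{zz}+(p-1)\bar v_z^2 = |\tau|\,[(p-1)Z_\rho-\rho Z_{\rho\rho}]/Z_\rho^3$ and simply asserts that the tip asymptotics imply positivity, leaving to the reader the verification that $(p-1)Z^B_\rho-\rho Z^B_{\rho\rho}<0$ on $(0,L]$ for $p$ large; you supply this verification explicitly via the substitution $s=-V^BV^B_x$, the translator ODE, and the monotonicity of $R(s;p)$ on $(\sqrt2,2\sqrt2]$, obtaining the concrete threshold $p>2+L^2/8$. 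I checked the algebra: the factorization $\Psi^B=\frac{s/\sqrt2-1}{(V^B)^2}(R(s;p)-(V^B)^2)$ agrees with the $Z$-identity under $s=-\rho/Z_\rho$, and $R$ is indeed decreasing on $(\sqrt2,2\sqrt2]$ for $p\geq 3$ with minimum $8(p-2)$. So your argument is correct and, if anything, fills in a step the paper compresses into one sentence; your collar-side constant $(p-1)\theta^2(2-\theta^2)>1000$ is also marginally sharper than the paper's $p\geq 2000\theta^{-2}$.
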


\begin{proof} Consider the scale invariant quantity
\begin{equation}
p^{-1}Q^{-1+\frac{2}{p}}H= VV_{xx}+(p-1)V_x^2.
\end{equation}
If $V=\theta\sqrt{|t|}$, namely if $\bar{v}=\theta$, then applying Theorem \ref{thm:Hessian.cyl.esti} (Hessian estimate in intermediate region) and Theorem \ref{thm:min.gradient.improved} (gradient estimate), provided that $p\geq 2000\theta^{-2}$, we infer that
\begin{equation}
\bar{v}\bar{v}_{zz}+(p-1)\bar{v}_z^2\geq -1000 \theta^{-4}+(p-1)\theta^{-2} > 0.
\end{equation}
On the other hand, observe that in the soliton region $\bar{v}\leq L/\sqrt{|\tau|}$ we have
\begin{equation}
   \bar{v}\bar{v}_{zz}+(p-1)\bar{v}_z^2=\frac{(p-1)Z_\rho(\rho)-\rho Z_{\rho\rho}(\rho)}{Z_\rho(\rho)^3}|\tau|  \qquad \mathrm{where}\quad \rho = |\tau|^{1/2}\bar{v}(z,\tau).
\end{equation}
Together with the tip asymptotics from Theorem \ref{thm_ADS1} (unique asymptotics) this implies the assertion.
\end{proof}

\begin{proposition}[{evolution of $H$}]\label{prop_evol_H}
The function $H=(V^p)_{xx}$ evolves by
\begin{align}
H_{t}=\frac{Q^{2-\frac{2}{p}}}{Q^{2-\frac{2}{p}}+\frac{1}{p^2} Q_x^2}H_{xx}+DH_{x}-(p-2)Q^{-\frac{2}{p}}H
+2(1-\tfrac{2}{p})Q^{-1-\frac{2}{p}}Q_x^2+R,
\end{align}
where the functions $D$ and $R$ are specified below.
\end{proposition}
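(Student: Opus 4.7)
The plan is to obtain the evolution of $H=(V^p)_{xx}$ by differentiating the evolution of $Q=V^p$ twice in $x$. I would first recall that the MCF profile function satisfies $V_t = V_{xx}/(1+V_x^2) - 1/V$. Using $V = Q^{1/p}$ and the chain rule, one computes $V_x = Q_x/(pQ^{1-1/p})$ and $pV^{p-1}V_{xx} = Q_{xx} - \tfrac{p-1}{p}Q_x^2/Q$; multiplying the evolution equation by $pV^{p-1}=pQ^{1-1/p}$ and simplifying yields
\begin{equation*}
Q_t \;=\; F(Q,Q_x,Q_{xx}) \;:=\; \frac{Q^{2-\frac{2}{p}}Q_{xx} - \tfrac{p-1}{p}Q^{1-\frac{2}{p}}Q_x^2}{Q^{2-\frac{2}{p}} + \tfrac{1}{p^2}Q_x^2} \;-\; pQ^{1-\frac{2}{p}}.
\end{equation*}

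Next, I would differentiate this identity twice in $x$. Viewing $F$ as a smooth function of its three arguments, the chain rule gives
\begin{equation*}
H_t \;=\; Q_{xxt} \;=\; F_{q_{xx}}\, Q_{xxxx} + F_{q_x}\, Q_{xxx} + F_q\, Q_{xx} + (\text{quadratic terms in } Q_x, Q_{xx}, Q_{xxx}).
\end{equation*}
A direct quotient-rule computation shows $F_{q_{xx}} = Q^{2-\frac{2}{p}}/(Q^{2-\frac{2}{p}}+\tfrac{1}{p^2}Q_x^2)$, matching the stated coefficient of $H_{xx}$. Moreover, the soliton-drift summand $-pQ^{1-\frac{2}{p}}$ of $F$ depends only on $Q$, so applying $\partial_x^2$ to it yields exactly
\begin{equation*}
\partial_x^2\bigl(-pQ^{1-\frac{2}{p}}\bigr) \;=\; -(p-2)Q^{-\frac{2}{p}}H + 2\bigl(1-\tfrac{2}{p}\bigr)Q^{-1-\frac{2}{p}}Q_x^2,
\end{equation*}
which produces the two explicit lower-order terms in the target equation.

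Finally, I would define $D$ to be the coefficient of $H_x=Q_{xxx}$ assembled from $F_{q_x}$ together with the contribution of $\partial_x(F_{q_{xx}})$ (the latter being the chain-rule term generated when $Q_x$ appears in the diffusion coefficient), and collect all remaining contributions, namely the quadratic-in-derivatives pieces and the non-drift part of $F_qH$, into the residual function $R$, which depends only on $Q,Q_x,Q_{xx}$. The only obstacle is purely bookkeeping: one must carefully apply the quotient rule, absorb common factors, and verify that the four displayed coefficients emerge in their stated normalized form with no higher-derivative residue leaking into $R$. No maximum-principle argument or nonlinear analysis enters at this stage; the proposition is a direct computation.
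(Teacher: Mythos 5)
Your approach matches the paper's: both compute $Q_t$ from $V_t = V_{xx}/(1+V_x^2) - 1/V$, obtaining the same formula (the paper writes $1-\tfrac1p$ where you write $\tfrac{p-1}{p}$), and then differentiate twice in $x$ to get the evolution of $H = Q_{xx}$, identifying the top-order diffusion coefficient as $F_{q_{xx}}$ and pulling out the two explicit lower-order terms coming from the $-pQ^{1-2/p}$ drift. Your computation of $\partial_x^2(-pQ^{1-2/p})$ is correct.

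One small bookkeeping subtlety worth flagging: the coefficient of $H_x = Q_{xxx}$ in $\partial_x^2 F$ is not $F_{q_x} + \partial_x F_{q_{xx}}$ as your parenthetical remark suggests, but rather $F_{q_x} + 2\,\partial_x F_{q_{xx}}$. The factor of $2$ arises because $\partial_x F_{q_{xx}}$ enters once from the Leibniz rule applied to $F_{q_{xx}}\,Q_{xxx}$, and once more from the $Q_{xxx}$-contribution hidden inside $\partial_x F_q \cdot Q_x + \partial_x F_{q_x}\cdot Q_{xx}$ (namely $F_{q q_{xx}}Q_x + F_{q_x q_{xx}}Q_{xx}$, which equals $\partial_x F_{q_{xx}}$ by Schwarz symmetry and the fact that $F$ is affine in $q_{xx}$). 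This is consistent with the paper's displayed formula for $D$, whose first summand is exactly $\bigl(Q^{2-2/p}/(Q^{2-2/p}+p^{-2}Q_x^2)\bigr)_x = \partial_x F_{q_{xx}}$ and whose second summand decomposes as $F_{q_x} + \partial_x F_{q_{xx}}$. Since the proposition does not commit to an explicit formula for $D$ (it is merely ``specified below''), this does not invalidate your argument — but you should be careful in the promised bookkeeping step to pick up both copies.
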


\begin{proof}Recall from \cite[Section 1]{ADS1} that the unrenormalized profile function evolves by
\begin{equation}
    V_t=\frac{V_{xx}}{1+V_x^2}-\frac{1}{V}.
\end{equation}
Consequently, the function $Q=V^p$ evolves by
\begin{equation}
    Q_t=\frac{ Q^{2-\frac{2}{p}}Q_{xx}-(1-\frac{1}{p})Q^{1-\frac{2}{p}}Q_x^2}{ Q^{2-\frac{2}{p}}+\frac{1}{p^2} Q_x^2}-pQ^{1-\frac{2}{p}}.
\end{equation}
Differentiating this equation with respect to $x$ yields
\begin{multline}
        Q_{xt}=\frac{ Q^{2-\frac{2}{p}}Q_{xxx}-(1-\frac{1}{p})(1-\frac{2}{p})Q^{-\frac{2}{p}}Q_x^3}{ Q^{2-\frac{2}{p}}+\frac{1}{p^2} Q_x^2}\\
        -\frac{2Q^{1-\frac{2}{p}}Q_x[QQ_{xx}-(1-\frac{1}{p})Q_x^2][(1-\frac{1}{p})Q^{1-\frac{2}{p}}+\frac{1}{p^2}Q_{xx}]}{ (Q^{2-\frac{2}{p}}+\frac{1}{p^2} Q_x^2)^2} -(p-2)Q^{-\frac{2}{p}}Q_x.
\end{multline}
Differentiating again, and letting $D$ be the coefficient function of $Q_{xxx}$, namely
\begin{align}
    D&=\left(\frac{  Q^{2-\frac{2}{p}} }{ Q^{2-\frac{2}{p}}+\frac{1}{p^2} Q_x^2}\right)_x-\frac{2Q^{1-\frac{2}{p}}Q_x[\frac{2}{p^2}QQ_{xx}-\frac{1}{p^2} (1-\frac{1}{p})Q_x^2+(1-\frac{1}{p})Q^{2-\frac{2}{p}}]}{ (Q^{2-\frac{2}{p}}+\frac{1}{p^2} Q_x^2)^2},
\end{align}
this yields the claimed evolution equation, provided we set
\begin{align}
    R=& (1-\tfrac{1}{p})(1-\tfrac{2}{p})\left(\frac{  Q^{-\frac{2}{p}}Q_x^3}{ Q^{2-\frac{2}{p}}+\frac{1}{p^2} Q_x^2}\right)_x\nonumber\\
    & -\left(\frac{2Q^{1-\frac{2}{p}}Q_x}{ (Q^{2-\frac{2}{p}}+\frac{1}{p^2} Q_x^2)^2}\right)_x [QQ_{xx}-(1-\tfrac{1}{p})Q_x^2][(1-\tfrac{1}{p})Q^{1-\frac{2}{p}}+\tfrac{1}{p^2}Q_{xx}]\\
    &+\frac{2(1-\frac{2}{p})Q^{1-\frac{2}{p}}Q_x^2}{ (Q^{2-\frac{2}{p}}+\frac{1}{p^2} Q_x^2)^2} \left(Q_{xx}[(1-\tfrac{1}{p})Q^{1-\frac{2}{p}}+\tfrac{1}{p^2}Q_{xx}] -(1-\tfrac{1}{p})Q^{-\frac{2}{p}} [QQ_{xx}-(1-\tfrac{1}{p})Q_x^2]\right).\nonumber
\end{align}
This proves the proposition.
\end{proof}

\begin{lemma}[{differential inequality for $H$}]\label{lem_diff_ineq_H}
If $\inf_{V(x,t)\leq \theta} H(x,t_0)<0$ at some $t_0\ll 0$, then the infimum is attained at some $x_0$ satisfying $L\sqrt{|t_0|/\log|t_0|} < V(x_0,t_0) < \theta \sqrt{|t_0|}$ and we have
\begin{equation}
H_t(x_0,t_0) \geq -p\frac{H(x_0,t_0)}{|t_0|}.
\end{equation}
\end{lemma}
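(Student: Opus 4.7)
The plan is a minimum principle argument at an interior negative minimum of $H$, using the evolution equation from Proposition \ref{prop_evol_H} together with the boundary positivity from Proposition \ref{lem:Hessian.soliton}. By Proposition \ref{lem:Hessian.soliton}, $H>0$ on both the outer rim $\{V=\theta\sqrt{|t_0|}\}$ and the inner region $\{V\leq L\sqrt{|t_0|/\log|t_0|}\}$; therefore any negative infimum of $H(\cdot,t_0)$ over the relevant region is attained at some interior point $x_0$ satisfying $L\sqrt{|t_0|/\log|t_0|}<V(x_0,t_0)<\theta\sqrt{|t_0|}$, i.e., in the collar region. At such an interior local minimum the first and second derivative tests yield $H_x(x_0,t_0)=0$ and $H_{xx}(x_0,t_0)\geq 0$.

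Substituting these conditions into the evolution equation from Proposition \ref{prop_evol_H}, the diffusion term is nonnegative, the drift term $DH_x$ vanishes, the reaction term $2(1-\tfrac{2}{p})Q^{-1-2/p}Q_x^2$ is nonnegative for $p>2$, and since $H(x_0,t_0)<0$ the zeroth-order term $-(p-2)Q^{-2/p}H$ is strictly positive. Dropping the nonnegative terms yields the pointwise inequality
\begin{equation}
  H_t(x_0,t_0) \;\geq\; -(p-2)\,Q^{-2/p}(x_0,t_0)\,H(x_0,t_0) \;+\; R(x_0,t_0).
\end{equation}

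The key step is then to absorb the remainder $R$ into this good zeroth-order term. Using Theorem \ref{thm:min.gradient.improved} (gradient estimate) and Theorem \ref{thm:Hessian.cyl.esti} (intermediate Hessian estimate), together with the quadratic concavity from Theorem \ref{thm_ADS2}, and translating back to the unrescaled profile function, one obtains in the collar region scale-invariant bounds on $V_x^2$ and on $VV_{xx}$ that in turn control the dimensionless ratios $Q_x^2/Q^{2-2/p}$ and $|Q Q_{xx}|/Q^{2-2/p}$ by $C(p)/|\tau_0|$. Expanding the explicit formula for $R$ from the proof of Proposition \ref{prop_evol_H} and plugging in these bounds, each summand of $R$ is controlled by $C(p)\,|\tau_0|^{-1}Q^{-2/p}|H|$ modulo negligible lower-order contributions.

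Finally, in the collar region $V\leq \theta\sqrt{|t_0|}$ gives $Q^{-2/p}=V^{-2}\geq 1/(\theta^2|t_0|)$, so choosing $\theta$ sufficiently small in terms of $p$ ensures that $(p-2)Q^{-2/p}|H|\geq 2p\,|H|/|t_0|$, whose excess easily absorbs the remainder term from the previous step. Combining these estimates yields $H_t(x_0,t_0)\geq -pH(x_0,t_0)/|t_0|$ as claimed. The main technical obstacle will be the algebraic bookkeeping for $R$: since $R$ is a rational combination of $Q,Q_x,Q_{xx}$ of high polynomial degree with several summands, one must carefully track each contribution and use the gradient, intermediate Hessian, and concavity estimates, combined with the smallness of $\theta$, to verify that every summand is indeed negligible compared to the zeroth-order term.
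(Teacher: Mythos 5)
Your overall plan (localize the negative infimum via Proposition \ref{lem:Hessian.soliton}, apply the second-derivative test at the interior minimum to kill the diffusion and drift terms, keep the favorable zeroth-order term, absorb $R$, then convert $Q^{-2/p}$ to $1/|t_0|$ via $V\leq\theta\sqrt{|t_0|}$) is the same as the paper's, and the opening and closing steps are fine. However, the control of $R$ in the middle has a genuine gap, in two places.

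First, the claimed scale-invariant bounds $Q_x^2/Q^{2-2/p}\lesssim C(p)/|\tau_0|$ and $|QQ_{xx}|/Q^{2-2/p}\lesssim C(p)/|\tau_0|$ are false throughout the collar. Since $Q_x^2/Q^{2-2/p}=p^2V_x^2$ and $V_x=\bar v_z/|\tau|^{1/2}$, the gradient estimate $g\approx 2$ gives $\bar v_z\approx -z/\bar v$, so near the inner collar boundary $\bar v\sim L|\tau|^{-1/2}$ one has $|\bar v_z|\sim |\tau|^{1/2}/L$ and hence $V_x\sim 1/L$ --- a constant in $\tau$, not $O(|\tau|^{-1/2})$. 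What actually holds is the standard cylindrical bound $V_x^2+V|V_{xx}|\leq C\eps$ for $L$ large (this is \cite[Proposition 2.15]{ADS2}, which the paper invokes as \eqref{cyl_est_unresc}), i.e. smallness in $\eps$, not decay in $|\tau|$.

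Second, even with the correct smallness one cannot absorb all of $R$ into the zeroth-order term $(p-2)Q^{-2/p}|H|$. The bound one obtains is $|R|\leq C\eps\left(Q^{-2/p}|Q_{xx}|+Q^{-1-2/p}Q_x^2\right)$, and the second piece would require the pointwise inequality $Q^{-1}Q_x^2\lesssim|H|$, which after unwinding reads $(2p-1)V_x^2\lesssim -VV_{xx}-(p-1)V_x^2$; this can fail precisely because the right side can be arbitrarily small at the minimum of $H$ (where $(p-1)V_x^2+VV_{xx}$ is merely negative, not bounded away from zero). The term you discarded as merely nonnegative, $2(1-\tfrac{2}{p})Q^{-1-2/p}Q_x^2$, is actually needed to absorb $C\eps Q^{-1-2/p}Q_x^2$; only the first piece $C\eps Q^{-2/p}|Q_{xx}|=C\eps Q^{-2/p}|H|$ is absorbed into the zeroth-order term, yielding $H_t\geq -(p-3)Q^{-2/p}H$ for $\eps$ small, from which the stated differential inequality follows as you describe.
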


\begin{proof}The infimum is indeed attained in the interior of the collar region thanks to Proposition \ref{lem:Hessian.soliton} ($H$ away from collar region).
Next, by standard cylindrical estimates \cite[Proposition 2.15]{ADS2}, given any $\eps>0$, for $L<\infty$ sufficiently large and $\tau$ sufficiently negative in the region $\bar{v}(z,\tau)\geq L|\tau|^{-1/2}$ we have
\begin{align}\label{cyl_est_unresc}
     |Q_x| \leq C \varepsilon Q^{1-\frac{1}{p}} \qquad\mathrm{and}\qquad |Q_{xx}|\leq C \varepsilon Q^{1-\frac{2}{p}}.
\end{align}
Therefore, for $\tau$ sufficiently negative in the region $\bar{v}(z,\tau)\geq L|\tau|^{-1/2}$ we obtain
\begin{equation}
    |R|\leq C\eps \left( Q^{-\frac{2}{p}}|Q_{xx}|+Q^{-1-\frac{2}{p}}Q_x^2\right).
\end{equation}
Together with Proposition \ref{prop_evol_H} (evolution of H) and the second derivative test we conclude that
\begin{align}
H_{t}(x_0,t_0)\geq -(p-3)Q(x_0,t_0)^{-\frac{2}{p}}H(x_0,t_0).
\end{align}
This implies the assertion.
\end{proof}

\begin{theorem}[Hessian estimate in tip region]\label{thm_hess_tip}
For all $t\ll 0$ we have
\begin{equation}
VV_{xx}+(p-1)V_x^2\geq 0\qquad \textrm{if}\,\,  V(x,t)\leq \theta |t|^{1/2}.
\end{equation}
\end{theorem}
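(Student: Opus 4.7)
The plan is a minimum-principle argument on
\[
\underline H(t):=\inf_{V(\cdot,t)\leq\theta\sqrt{|t|}}H(\cdot,t).
\]
Since $H=(V^p)_{xx}=pV^{p-2}[VV_{xx}+(p-1)V_x^2]$, the theorem is equivalent to $\underline H(t)\geq 0$ for $t\ll 0$. Suppose toward a contradiction that $\underline H(t_0)<0$ for some $t_0\ll 0$. By Proposition \ref{lem:Hessian.soliton} ($H$ away from collar region), $H>0$ on the moving spatial boundary $\{V=\theta\sqrt{|t|}\}$ and for $V\leq L\sqrt{|t|/\log|t|}$, so the infimum must be attained at an interior point of the collar region. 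A Hamilton-type envelope argument applied to the pointwise inequality from Lemma \ref{lem_diff_ineq_H} then promotes it to the ODE
\[
\underline H_t(t)\geq -\frac{p\underline H(t)}{|t|}\qquad\text{whenever}\,\,\underline H(t)<0.
\]
Introducing $g(t):=-\underline H(t)|t|^{-p}$, a short computation gives $g_t\leq 0$ on any interval of negativity of $\underline H$. Since $g(t_0)>0$ and continuity rules out $\underline H$ crossing zero (a zero would force $g$ to vanish, violating monotonicity), $\underline H$ stays strictly negative on all of $(-\infty,t_0]$, which yields
\[
-\underline H(t)\geq c_0 |t|^p\qquad\text{for all }t\leq t_0,
\]
with $c_0:=-\underline H(t_0)|t_0|^{-p}>0$.

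For the opposing upper bound, I would exploit that by Lemma \ref{lem_diff_ineq_H} the infimum lies where $\bar v\geq L/\sqrt{\log|t|}=L|\tau|^{-1/2}$. Plugging this into Theorem \ref{thm:Hessian.cyl.esti} (Hessian estimate in intermediate region) gives $\bar v\bar v_{zz}\geq -1000/\bar v^4\geq -C(\log|t|)^2$. Dropping the nonnegative $(p-1)\bar v_z^2$ term and using $V^{p-2}\leq \theta^{p-2}|t|^{(p-2)/2}$ in the collar, this yields
\[
\underline H(t)\geq -C_p(\log|t|)^2|t|^{(p-2)/2}.
\]

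Combining the two bounds produces $c_0|t|^{(p+2)/2}\leq C_p(\log|t|)^2$ for all $t\leq t_0$, which is impossible as $t\to-\infty$. This contradiction forces $\underline H(t)\geq 0$ for all $t\ll 0$, completing the proof. The main obstacle is justifying the envelope step that promotes the pointwise inequality of Lemma \ref{lem_diff_ineq_H} to the ODE on $\underline H$; this relies crucially on Proposition \ref{lem:Hessian.soliton} keeping the infimum in the interior of the collar, so the $t$-dependent domain boundary does not contribute. Once that step is in place, the argument reduces to a clean comparison of a power-of-$|t|$ lower bound against a logarithmic-times-smaller-power upper bound, with $p$ large (which is already required by Proposition \ref{lem:Hessian.soliton}) making the race trivially one-sided.
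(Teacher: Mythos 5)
Your proof is correct and follows essentially the same route as the paper's: integrate the pointwise inequality from Lemma \ref{lem_diff_ineq_H} via the Hamilton envelope argument to get $-\underline H(t)\geq c_0|t|^p$ for $t\leq t_0$, then contradict a polynomial upper bound with a strictly smaller $|t|$-power. The only deviation is cosmetic — the paper gets that upper bound by directly invoking the cylindrical estimate \eqref{cyl_est_unresc} (giving $|\underline H|\leq C\eps Q^{1-2/p}\leq C|t|^{(p-2)/2}$), whereas you route through Theorem \ref{thm:Hessian.cyl.esti}; both work. One small scaling slip to fix: since $\bar v(z,\tau)=v(|\tau|^{1/2}z,\tau)$ you have $VV_{xx}=vv_{yy}=\bar v\bar v_{zz}/|\tau|$, not $\bar v\bar v_{zz}$, so your intermediate bound should read $\underline H(t)\geq -C_p|t|^{(p-2)/2}\log|t|$ rather than with $(\log|t|)^2$; this does not affect the contradiction since the $|t|$-power still beats the logarithm.
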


\begin{proof}
Set $\underline{H}(t):=\inf_{V(x,t)\leq \theta} H(x,t)$. If we had $\underline{H}(t_0)<0$ at some $t_0\ll 0$, then by Lemma \ref{lem_diff_ineq_H} (differential inequality for $H$) for all $t\leq t_0$ we would obtain
\begin{equation}
|t|^{-p}\underline{H}(t)\leq |t_0|^{-p}\underline{H}(t_0),
\end{equation}
contradicting \eqref{cyl_est_unresc}. This proves the theorem.
\end{proof}

Observer that Theorem \ref{thm:hessian.improved_intro} (Hessian estimate) now follows by combining Theorem \ref{thm:Hessian.cyl.esti} (Hessian estimate in intermediate region) and Theorem \ref{thm_hess_tip} (Hessian estimate in tip region).\\

Finally, let us establish the corresponding variant for noncollapsed translators in $\mathbb{R}^4$:

\begin{theorem}[Hessian estimate in tip region for translators]\label{thm_hess_tip_trans}
There exist $\kappa>0$, $\tau_\ast>-\infty$, $\theta>0$ and $p<\infty$, such that if $M$ is $\kappa$-quadratic at some time $\tau_0\leq\tau_\ast$ then for all $\tau\leq\tau_0$ we have
\begin{equation}
VV_{xx}+(p-1)V_x^2\geq 0\qquad \textrm{if}\,\,  V(x,t)\leq \theta |t|^{1/2}.
\end{equation}
\end{theorem}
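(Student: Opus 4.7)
The plan is to parallel the proof of Theorem \ref{thm_hess_tip}, making exactly the same kind of modifications that we used to pass from Theorem \ref{thm:Hessian.cyl.esti} to its translator counterpart Theorem \ref{thm:Hessian.cyl.esti_trans}. Namely, we will work with the unrescaled function $H=(V^p)_{xx}$ and chase a supersolution-type differential inequality through the collar region, replacing every cited ``ancient ovals'' input by its uniform $\kappa$-quadratic translator analogue.

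First I would establish the translator version of Proposition \ref{lem:Hessian.soliton}. In the intermediate edge $V(x,t)=\theta\sqrt{|t|}$ one applies Theorem \ref{thm:Hessian.cyl.esti_trans} (Hessian estimate in intermediate region for translators) together with Theorem \ref{thm:min.gradient.improved_intro_trans} (gradient estimate for translators) in place of the corresponding ancient oval statements, and chooses $p\geq 2000\theta^{-2}$ exactly as before. In the soliton edge $V\leq L\sqrt{|t|/\log|t|}$ one uses the tip region estimate from Theorem \ref{thm_transl_asy} (uniform asymptotics) in place of Theorem \ref{thm_ADS1}. The conclusion is that, after fixing $p$ and $\theta$, there exist $\kappa>0$ and $\tau_\ast>-\infty$ with the following property: if $M$ is $\kappa$-quadratic at some time $\tau_0\leq\tau_\ast$, then $H(x,t)>0$ along both boundaries of the collar region, for all $\tau\leq\tau_0$.

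Next I would reprove Proposition \ref{prop_evol_H} in the translator setting. Using the translator evolution equation from \cite[Proposition 5.3]{CHH_translators}, which differs from the ancient oval equation by an additional term of the form $e^{\tau}N[v]$, a direct computation yields the same evolution equation for $H$ plus an error term $\tilde E$ built out of $N[v]$ and its first two $x$-derivatives. The cylindrical estimates from \cite[Lemma 5.6]{CHH_translators} give the bounds \eqref{cyl_est_unresc} in the collar region and allow us to bound $\tilde E$ by some $e^{\frac{99}{100}\tau}$ there, which is much smaller than $|t|^{-1}|H|$ near any would-be negative interior infimum. Therefore the translator analogue of Lemma \ref{lem_diff_ineq_H} still holds: if $\inf_{V\leq\theta}H(\cdot,t_0)<0$ at some $t_0$ with $\tau_0\leq\tau_\ast$ sufficiently negative, then at the interior minimizer $x_0$ in the collar region one gets
\begin{equation}
H_t(x_0,t_0)\geq -(p-2)Q(x_0,t_0)^{-\frac{2}{p}}H(x_0,t_0)-e^{\frac{99}{100}\tau}.
\end{equation}
The error term is absorbed because $H(x_0,t_0)<0$ and $|H|\gg e^{\frac{99}{100}\tau}$ at the would-be minimizer.

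Finally, setting $\underline{H}(t):=\inf_{V(x,t)\leq\theta}H(x,t)$ and applying the above differential inequality exactly as in the proof of Theorem \ref{thm_hess_tip}, we conclude that $\underline{H}(t)\geq 0$ for all $\tau\leq\tau_0$, which rewrites as $VV_{xx}+(p-1)V_x^2\geq 0$ in the collar region. The main technical point to be careful about is ensuring that the new error term $\tilde E$ is genuinely negligible at every step; since \cite[Lemma 5.6]{CHH_translators} controls all the derivatives of $N[v]$ needed to bound $\tilde E$ in the region $\{V\leq \theta\sqrt{|t|}\}$, this poses no real obstacle and the argument goes through uniformly in the family of $\kappa$-quadratic translators.
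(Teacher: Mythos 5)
Your overall strategy is the same as the paper's: run the ancient oval argument for $H=(V^p)_{xx}$, replacing each cited input by its uniform translator analogue and handling the new nonlinear error. The boundary step (translator version of Proposition \ref{lem:Hessian.soliton}) is handled exactly as you say. However, there are two genuine problems with how you treat the error term, and they interact.

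First, the size of the error is wrong. You claim $|\tilde E|\leq e^{\frac{99}{100}\tau}=|t|^{-\frac{99}{100}}$, apparently by transporting the rescaled error bound from the gradient/intermediate-region arguments. But the evolution of $H=(V^p)_{xx}$ is written in \emph{unrescaled} variables, and the error comes from differentiating a term of the schematic form $|t|^{-3/2}\,Q^{1-\frac{1}{p}}N[v]$ twice in $x$. Since $Q\sim |t|^{p/2}$ and $Q_x\sim Q^{1-\frac{1}{p}}|t|^{-1/2}$ in the collar region, this produces an error of magnitude comparable to $|t|^{-1}Q^{1-\frac{4}{p}}$, which \emph{grows} with $|t|$ because $p$ is chosen large. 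The correct statement, as in the paper, is
\begin{equation}
|E|\leq |t|^{-\frac{99}{100}}Q^{1-\frac{4}{p}}\qquad \mathrm{for}\,\, t\leq -e^{-\tau_0},
\end{equation}
which is nothing like $|t|^{-\frac{99}{100}}$; you have dropped the $Q^{1-\frac{4}{p}}$ factor.

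Second, even granting the error bound, your absorption argument does not close. You absorb $\tilde E$ into $|t|^{-1}|H|$ ``near any would-be negative interior infimum,'' but there is no a priori lower bound on $|H|$ there: the whole point of the maximum principle argument is to rule out $\underline{H}$ dipping just barely below $0$, and in that regime $|H|$ can be arbitrarily small. The paper sidesteps this by absorbing $E$ into the \emph{manifestly positive} term $2(1-\tfrac{2}{p})Q^{-1-\frac{2}{p}}Q_x^2$ already present in the evolution equation of $H$ from Proposition \ref{prop_evol_H}. In the collar region this term has a definite size $\sim Q^{1-\frac{4}{p}}$ (since $Q^{-1-\frac{2}{p}}Q_x^2 = p^2 Q^{1-\frac{4}{p}}V_x^2$ and $V_x$ is bounded below there by the gradient estimate), so the error $|t|^{-\frac{99}{100}}Q^{1-\frac{4}{p}}$ is dominated uniformly, independently of how small $H$ is. After that absorption the resulting differential inequality
\begin{equation}
H_{t}(\bar{x}_0,\bar{t}_0)\geq -(p-3)Q(\bar{x}_0,\bar{t}_0)^{-\frac{2}{p}}H(\bar{x}_0,\bar{t}_0)
\end{equation}
has no error term at all, and the integration/contradiction step proceeds exactly as in Theorem \ref{thm_hess_tip}. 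You should replace your absorption into $|t|^{-1}|H|$ by absorption into this good term, and correct the error bound accordingly.
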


\begin{proof}
Arguing as in the proof of Proposition \ref{lem:Hessian.soliton} ($H$ away from collar region), but now using Theorem \ref{thm:min.gradient.improved_intro_trans} (gradient estimate for translators) and Theorem \ref{thm:Hessian.cyl.esti_trans} (Hessian estimate in intermediate region for translators) in lieu of Theorem \ref{thm:min.gradient.improved} (gradient estimate) and Theorem \ref{thm:Hessian.cyl.esti} (Hessian estimate in intermediate region) we can find $\kappa>0$, $\tau_\ast>-\infty$, $\theta>0$, $L<\infty$ and $p<\infty$ with the following significance. If $M$ is $\kappa$-quadratic at some time $\tau_0\leq\tau_\ast$, then for all $t\leq -e^{-\tau_0}$ the function $H=(V^p)_{xx}$ satisfies
\begin{equation}
 H(x,t)>0 \quad\mathrm
{if}\,\,\, V(x,t)=\theta\sqrt{|t|} \,\,\,\mathrm{or}\,\, \, V(x,t)\leq L\sqrt{|t|/\log|t|}.
\end{equation}
Next, using again the evolution equation from \cite[Proposition 5.3]{CHH_translators} we infer that 
\begin{align}
H_{t}=\frac{Q^{2-\frac{2}{p}}}{Q^{2-\frac{2}{p}}+\frac{1}{p^2} Q_x^2}H_{xx}+DH_{x}-(p-2)Q^{-\frac{2}{p}}H
+2(1-\tfrac{2}{p})Q^{-1-\frac{2}{p}}Q_x^2+R+E,
\end{align}
where the only difference with Proposition \ref{prop_evol_H} (evolution of $H$) is the error term $E$, which, assuming $\kappa$-quadraticity at time $\tau_0\leq\tau_\ast$, in the collar region $\{ L\sqrt{|t|/\log|t|}\leq V\leq \theta\sqrt{|t|} \}$ satisfies
\begin{equation}
|E|\leq |t|^{-\frac{99}{100}}Q^{1-\frac{4}{p}}\qquad \mathrm{for}\,\, t\leq -e^{-\tau_0}.
\end{equation}
Observe that the error term $E$ can be absorbed into the good term $Q^{-1-\frac{2}{p}}Q_x^2$. Hence, using the cylindrical estimates from \cite[Lemma 5.6]{CHH_translators} in lieu of the ones from \cite[Proposition 2.15]{ADS2}
 we still get the same differential inequality
\begin{align}
H_{t}(\bar{x}_0,\bar{t}_0)\geq -(p-3)Q(\bar{x}_0,\bar{t}_0)^{-\frac{2}{p}}H(\bar{x}_0,\bar{t}_0),
\end{align}
and arguing as above we can conclude the proof of the theorem.
\end{proof}

Observer that Theorem \ref{thm:hessian.improved_intro_trans} (Hessian estimate for translators) now follows by combining Theorem \ref{thm:Hessian.cyl.esti_trans} (Hessian estimate in intermediate region for translators) and Theorem \ref{thm_hess_tip_trans} (Hessian estimate in tip region for translators).
 
\bigskip

\bibliography{profile_estimates}

\bibliographystyle{alpha}

\vspace{10mm}

{\sc Kyeongsu Choi, School of Mathematics, Korea Institute for Advanced Study, 85 Hoegiro, Dongdaemun-gu, Seoul, 02455, South Korea}\\

{\sc Robert Haslhofer, Department of Mathematics, University of Toronto,  40 St George Street, Toronto, ON M5S 2E4, Canada}\\

{\sc Or Hershkovits, Institute of Mathematics, Hebrew University, Givat Ram, Jerusalem, 91904, Israel}\\

\emph{E-mail:} choiks@kias.re.kr, roberth@math.toronto.edu, or.hershkovits@mail.huji.ac.il

\end{document}